\documentclass[12pt]{article}
\usepackage[utf8]{inputenc}
\usepackage[english]{babel}
\usepackage{amsmath, amssymb, amsthm}
\usepackage{graphicx}
\usepackage[margin=1in]{geometry}
\usepackage{mathtools}
\usepackage{hyperref}
\hypersetup{
    colorlinks=true,
    linkcolor=blue,
    citecolor=blue,
    urlcolor=blue,
}

\newtheorem{theorem}{Theorem}[section]
\newtheorem{lemma}[theorem]{Lemma}
\newtheorem{proposition}[theorem]{Proposition}
\newtheorem{corollary}[theorem]{Corollary}

\newtheorem{remark}[theorem]{Remark}
\theoremstyle{remark}

\title{ Parameter Estimation of the Stochastic Allen--Cahn Equation via 
variations.}

\author{  Simon Chony Acosta \footnote{Departamento de Matem\'atica, Universidade Estadual de Campinas, Brazil. \texttt{s252865@dac.unicamp.br}.}
 \and    Christian Olivera \footnote{Departamento de Matem\'atica, Universidade Estadual de Campinas, Brazil. \texttt{colivera@ime.unicamp.br}.} \and 
Ciprian Tudor  \footnote{Université de Lille 1 , France . \texttt{ciprian.tudor@univ-lille.fr}.} 
}
\date{}

\begin{document}

\maketitle

\begin{abstract}
This paper addresses statistical inference for stochastic partial differential equations. We study the stochastic Allen-Cahn equation driven by space–time white noise and analyze its mild solution. Our main focus is the asymptotic behavior of the spatial quadratic variation of the solution, for which we establish the exact limiting value. As an application, we develop parameter estimation procedures based on these asymptotic results.

We prove that the unique solution can be decomposed as 
\(u = X + Y\) where  $X$ denotes the solution of the linear stochastic heat equation and  $Y$  accounts for the nonlinear effects. Exploiting a detailed analysis of the heat kernel and its scaling behavior, we derive Hölder continuity properties of  $Y$ in both spatial and temporal variables, showing that  $Y$ exhibits substantially higher regularity than  $X$.  This decomposition and the resulting regularity estimates are key ingredients in the development of parameter estimation procedures  based on the asymptotic behavior of quadratic variations of the solution.

\end{abstract}

\section{Introduction}

In this paper we study parameter estimation for the one-dimensional stochastic 
Allen--Cahn equation,
\begin{equation}\label{alen1}
	\frac{\partial u}{\partial t}(t,x)= \varepsilon\frac{\partial^{2}u}{\partial x^{2}}(t,x)+ u-u^{3} + \dot{W}(t,x),\qquad (t,x)\in[0,T]\times\mathbb{T},
\end{equation}
with initial condition \(u(0,x)=u_{0}(x)\), where  $\mathbb{T}$ is the one-dimensional flat torus,  \(\varepsilon>0\) is an unknown 
diffusivity parameter governing the strength of the diffusion relative to the 
nonlinear reaction term \(u-u^{3}\), and \(\dot{W}\) denotes a space--time Gaussian 
white noise on \([0,T]\times\mathbb{T}\). The inclusion of space--time white noise 
models thermal fluctuations or external random perturbations.
 
\medskip 

This SPDE has been extensively studied because of its broad range of applications in polymer physics, biological reaction-diffusion systems, and materials science. A classical example is phase separation in an emulsion, such as curdled mayonnaise, where oil and water, initially mixed, progressively segregate into distinct pure phases. In this process, the interplay between interfacial tension, controlled by the parameter $\epsilon$. 

\medskip 

Additive perturbations of the Allen–Cahn equation were studied in the one-dimensional case in    \cite{Funa}, \cite{Funa2}, \cite{Farias}, \cite{Weber}  and in the higher-dimensional case in \cite{Berglund2019}. However, in dimensions higher than one, the Allen–Cahn equation driven by space–time white noise is, in general, ill-posed.
Numerical approximations for the stochastic Allen-Cahn equations have been studied extensively in the last decade, see for instance  \cite{Becker},  \cite{BrehierGoudeneges2019},  \cite{Brehier}. 
\cite{Vand}
\medskip 

\medskip

Owing to the rapid increase in available data and advances in information technology, SPDE models have become increasingly popular among practitioners. As a result, effective statistical methods are needed to calibrate and infer the parameters of these complex models.
Statistical inference for SPDEs based on discrete observations has been extensively studied in the literature; see, for example \cite{Alt},  \cite{Chong}, \cite{CimpNachTudor}, \cite{GT}, \cite{Gau}, \cite{Hil},  \cite{Hu},   \cite{Janak}, \cite{OT},  \cite{To} and the references therein. For a comprehensive overview of the field, we refer the reader to the surveys   \cite{Cianello} or \cite{Tudor2}.
In particular, parameter estimation for SPDEs based on power variation techniques has attracted substantial attention in recent years and has emerged as a highly active direction of current research. These methods have proved to be particularly effective for the analysis of discretely observed stochastic systems and have led to significant advances in the statistical theory of SPDEs, see for instance  \cite{AssaadTudor2021},  \cite{Chong}, \cite{Cianello2}, \cite{Cianello3}, \cite{Hil} and  \cite{Torres}.

\medskip

We aim to estimate the drift parameter $\varepsilon>0$ from discrete observations of the solution to the SPDE (\ref{alen1}). Our estimators are constructed using quadratic variations in both space and time of the solution $u$. Following the approach recently developed for the stochastic Burgers equation \cite{AssaadTudor2021}, we decompose the solution into a linear component $X$, corresponding to the stochastic heat equation, and a nonlinear drift component Y, which contains the term \(u-u^{3}\). We show that $ Y$ exhibits higher regularity than  $X$: while $X$ is Hölder continuous of order \(\delta<1/4\) in time and \(\delta<1/2\) in space, the process $Y$ attains improved regularity, being nearly Lipschitz in space under suitable assumptions. This enhanced smoothness implies that the spatial quadratic and temporal quartic variations of u are asymptotically dominated by those of $X$, which in turn enables the construction of consistent estimators for $\varepsilon>0$.

\medskip  
The present work is organized as follows. In Section~2, we recall  the mild formulation of the SPDE and some  properties of the solution. In Section~3, we conduct a detailed analysis of the process \(Y\): first its spatial regularity (Subsection~3.1) and subsequently its temporal regularity (Subsection~3.2), providing bounds for its increments in \(L^{p}\) norms. The technical lemmas concerning the heat kernel are established using scaling arguments. Finally, in Section~4, we apply these results to parameter estimation via quadratic and quartic variations, proving strong consistency and establishing \(L^{p}\) error bounds.

\section{Preliminaries}

We consider the SPDE  with additive white noise:
\begin{equation}
\frac{\partial u}{\partial t}(t,x)= \varepsilon\frac{\partial^{2}u}{\partial x^{2}}(t,x)+f(u(t,x))+ \dot{W}(t,x),\qquad (t,x)\in[0,T]\times\mathbb{T},
\end{equation}
with initial condition \(u(0,x)=u_{0}(x)\) (we set \(u_{0}=0\) for simplicity) and \(f(u)=u-u^{3}\). Here \(\dot{W}\) denotes a space--time Gaussian white noise on \([0,T]\times\mathbb{T}\); that is, a centered Gaussian process with covariance
\begin{equation}
\mathbf{E}\bigl[W(A)W(B)\bigr] = \lambda(A\cap B),\qquad A,B\in\mathcal{B}_{b}([0,T]\times\mathbb{T}),
\end{equation}
where \(\mathcal{B}_{b}([0,T]\times\mathbb{T})\) denotes the collection of bounded 
Borel subsets of \([0,T]\times\mathbb{T}\) and  \(\lambda\) is the Lebesgue measure. Throughout this paper, \(\mathbb{T} = \mathbb{R}/\mathbb{Z}\) denotes the 
one-dimensional flat torus, identified with the interval \([0,1)\) with 
periodic boundary conditions.  For \(\phi\in L^{2}([0,T]\times\mathbb{T})\) we write \(W(\phi)=\int_{0}^{T}\int_{\mathbb{T}}\phi(s,y)W(ds,dy)\) for the Wiener integral.

The solution is understood in the mild sense:
\begin{align}
u(t,x)&=\int_{\mathbb{T}}p_{\varepsilon}(x-y)u_{0}(y)\,dy+\int_{0}^{t}\int_{\mathbb{T}}p_{\varepsilon(t-s)}(x-y)f(u(s,y))\,dy\,ds \notag \\
&\quad + \int_{0}^{t}\int_{\mathbb{T}}p_{\varepsilon(t-s)}(x-y)W(ds,dy),
\end{align}
where \(p_{t}(x)=\sum_{k\in\mathbb{Z}}\frac{1}{\sqrt{4\pi t}}e^{-\frac{(x-k)^{2}}{4t}}\) denotes the periodic heat kernel. In what follows, we set \(\varepsilon=1\) (the parameters can be reinstated via scaling). Thus, we work with
\begin{equation}
u(t,x)=\int_{0}^{t}\int_{\mathbb{T}}p_{t-s}(x-y)f(u(s,y))\,dy\,ds+\int_{0}^{t}\int_{\mathbb{T}}p_{t-s}(x-y)W(ds,dy). \label{eq:mild}
\end{equation}

We define
\begin{equation}\label{xy}
X(t,x)=\int_{0}^{t}\int_{\mathbb{T}}p_{t-s}(x-y)W(ds,dy), \qquad
Y(t,x)=\int_{0}^{t}\int_{\mathbb{T}}p_{t-s}(x-y)f(u(s,y))\,dy\,ds,
\end{equation}
so that \(u=X+Y\). The process \(X\) is the solution to the linear stochastic heat equation, whose properties are well known  (see, e.g., \cite{AssaadTudor2021, Tudor2, PospisilTribe2007}, see also Remark \ref{rem1} below):
\begin{itemize}
\item Hölder regularity: for every \(p\ge1\) and \(s,t\in[0,T]\), \(x,y\in \mathbb{T}\),
\begin{equation}
\mathbf{E}|X(t,x)-X(s,y)|^{p}\le C\bigl(|t-s|^{p/2}+|x-y|^{p/2}\bigr). \label{eq:Xholder}
\end{equation}
\item Spatial quadratic variation: For fixed \(t>0\) and an equidistant partition \(x_i=A_1+i\frac{A_2-A_1}{N}\) of an interval \([A_1,A_2]\subset\mathbb{T}\), the following limit holds in \(L^2(\Omega)\):
\begin{equation}
\sum_{i=0}^{N-1}\bigl(X(t,x_{i+1})-X(t,x_i)\bigr)^2 \xrightarrow[N\to\infty]{L^2} \frac{1}{2}(A_2-A_1). \label{eq:quadvar}
\end{equation}
\item Temporal quartic variation: For fixed \(x\in\mathbb{T}\) and an equidistant partition \(t_i=A_1+i\frac{A_2-A_1}{N}\) of a time interval \([A_1,A_2]\subset(0,T]\), the following limit holds in \(L^2(\Omega)\):
\begin{equation}
\sum_{i=0}^{N-1}\bigl(X(t_{i+1},x)-X(t_i,x)\bigr)^4 \xrightarrow[N\to\infty]{L^2} \frac{3}{\pi}(A_2-A_1). \label{eq:quartvar}
\end{equation}

\end{itemize}
Moreover, for the solution \(u\) of \eqref{eq:mild}, the following moment estimate holds, see \cite[Chapter 6]{Cerrai2001}. (or see \cite{BrehierGoudeneges2019} for a similar context):
\begin{equation}
\sup_{t\in[0,T],x\in\mathbb{T}}\mathbf{E}|u(t,x)|^{q}\le C(q,T)<\infty,\qquad\forall q\ge2. \label{eq:moments}
\end{equation}

\begin{remark}\label{rem1}
	These exact variations for the linear stochastic heat equation were established 
	in \cite{PospisilTribe2007}, \cite{Sw} for the whole real line \(\mathbb{R}\). 
	The extension to the torus \(\mathbb{T}\) follows from the fact that the 
	periodic heat kernel \(p_t\) and the standard heat kernel \(G_t\) on 
	\(\mathbb{R}\) have identical small-time asymptotics. More precisely, since
	\[
	p_t(x) = \sum_{k\in\mathbb{Z}} G_t(x-k) = G_t(x) + \sum_{k\neq 0} G_t(x-k),
	\]
	the additional images $k\neq 0$ contribute terms of order $e^{-c/t}$ as 
	$t\to 0$, which are exponentially small and do not affect any power of $t$. 
	As a consequence, the two key integrals governing the quadratic and quartic 
	variations of $X$, namely
	\[
	\int_{\mathbb{T}} p_t(y)^2\,dy = \frac{1}{\sqrt{4\pi t}}\bigl(1+O(e^{-c/t})\bigr)
	\quad\text{and}\quad
	\int_{\mathbb{T}} p_t(y)^4\,dy = \frac{3}{(\sqrt{4\pi t})^3}\bigl(1+O(e^{-c/t})\bigr),
	\]
	have exactly the same leading-order behavior as their counterparts on 
	\(\mathbb{R}\). Therefore all the asymptotic computations of 
	\cite{PospisilTribe2007}, \cite{Sw}, \cite{CimpNachTudor} carry over verbatim to \(\mathbb{T}\), 
	and equations \eqref{eq:quadvar} and \eqref{eq:quartvar} hold with the same 
	constants; see also \cite[Remark~6.2]{PospisilTribe2007}.
	
\end{remark}
\section{Analysis of the nonlinear drift component  \(Y\)}

The nonlinear drift component \(Y\), defined by (\ref{xy}) incorporates all the nonlinear effects of the Allen--Cahn equation through 
the reaction term \(f(u)=u-u^{3}\). Unlike the stochastic component \(X\), 
which is H\"older continuous of order \(\delta<1/4\) in time and \(\delta<1/2\) 
in space, the process \(Y\) is a pathwise convolution of the heat kernel against 
the nonlinearity, and this structure is the source of its enhanced regularity. 
The main goal of this section is to quantify this improvement via precise 
\(L^{\beta}\) bounds on the increments of the periodic heat kernel \(p_t\), 
established through scaling arguments in Lemmas~\ref{lem:spatial} 
and~\ref{lem:temporal} below. These regularity estimates are the key ingredient 
in the parameter estimation procedures developed in Section~4. 

\subsection{Spatial Regularity}

We begin with a technical lemma concerning the spatial increment of the heat kernel on the real line.

\begin{lemma}\label{lem:spatial}
Let \(G\) denote the heat kernel on \(\mathbb{R}\). For every \(t\in[0,T]\), \(x\in \mathbb{R}\), and \(h>0\), define
\begin{equation}
A_{t,x}(h)\coloneq\int_{\mathbb{R}}\left(\int_{0}^{t}\bigl|G_{t-s}(x+h-y)-G_{t-s}(x-y)\bigr|^{\beta}ds\right)^{\frac{1}{\beta}}dy\,\leq C_\beta h^{\frac{2}{\beta}}
\end{equation}
for all \(\beta\in(2,3)\), with \(C_{\beta}>0\). Moreover, for the periodic heat kernel \(p\), we have for all \(t\in[0,T]\), \(x\in\mathbb{T}\), and \(h>0\) that
\begin{equation}
A^{per}_{t,x}(h)\coloneq\int_{\mathbb{T}}\left(\int_{0}^{t}\bigl|p_{t-s}(x+h-y)-p_{t-s}(x-y)\bigr|^{\beta}ds\right)^{\frac{1}{\beta}}dy \leq A_{t,x}(h).
\end{equation}
\end{lemma}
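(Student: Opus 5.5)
The plan is to obtain the bound on $A_{t,x}(h)$ from a parabolic scaling argument, and then derive the periodic statement from the image sum $p_t=\sum_k G_t(\cdot-k)$ together with Minkowski's inequality.

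\emph{Step 1 (reduction).} Set $r=t-s$ and $z=x-y$. Since the integrand is nonnegative, we may enlarge the time range:
\[
A_{t,x}(h)\le \int_{\mathbb{R}}\Bigl(\int_0^\infty |G_r(z+h)-G_r(z)|^\beta\,dr\Bigr)^{1/\beta}dz .
\]
The right-hand side does not depend on $t$ or $x$.

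\emph{Step 2 (scaling).} We use the identity $G_r(z)=h^{-1}G_{r/h^2}(z/h)$ and substitute $r=h^2\rho$, $z=hw$.
\begin{itemize}
\item The inner integral becomes $h^{2-\beta}\int_0^\infty|G_\rho(w+1)-G_\rho(w)|^\beta d\rho$.
\item After taking the $1/\beta$ power, this contributes the factor $h^{2/\beta-1}$.
\item The change of variables $dz=h\,dw$ contributes another factor $h$.
\end{itemize}
Altogether we get $A_{t,x}(h)\le K_\beta h^{2/\beta}$, where
\[
K_\beta=\int_{\mathbb{R}}\Bigl(\int_0^\infty |G_\rho(w+1)-G_\rho(w)|^\beta d\rho\Bigr)^{1/\beta}dw .
\]

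\emph{Step 3 (finiteness of $K_\beta$; the main obstacle).} I would split the $w$-integral into a bounded region $|w|\le 3$ and the tails $|w|>3$, and check three things.

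\emph{Large $\rho$.} The bound $|G_\rho(w+1)-G_\rho(w)|\le \sup|\partial_\xi G_\rho|\le C\rho^{-1}$ has $\beta$-th power $\rho^{-\beta}$. This is integrable at $\infty$ because $\beta>1$.

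\emph{Small $\rho$, $w$ near $0$ or $-1$.} We bound the difference by $G_\rho(w+1)+G_\rho(w)$. This gives
\[
\int_0^1\rho^{-\beta/2}e^{-\beta w^2/4\rho}\,d\rho\lesssim |w|^{2-\beta}+1 .
\]
The $1/\beta$ power is then $\lesssim |w|^{-(\beta-2)/\beta}$, which is locally integrable since $(\beta-2)/\beta<1$. Away from $0$ and $-1$ the small-$\rho$ contribution is exponentially small. Hence the bounded region gives a finite contribution.

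\emph{Tails $|w|>3$.} By the mean value theorem, the difference is bounded by $|\partial_\xi G_\rho(\xi)|$ for some $\xi$ with $|\xi|\ge|w|/2$. We then use
\[
|\partial_\xi G_\rho(\xi)|\le C|\xi|\rho^{-3/2}e^{-\xi^2/4\rho}\le C\rho^{-1}e^{-\xi^2/8\rho}\le C\rho^{-1}e^{-w^2/32\rho}.
\]
Hence the inner integral is bounded by $C\int_0^\infty\rho^{-\beta}e^{-cw^2/\rho}d\rho=C'|w|^{2-2\beta}$. Its $1/\beta$ power is $|w|^{-(2\beta-2)/\beta}$. This is integrable at infinity exactly when $(2\beta-2)/\beta>1$, that is, when $\beta>2$; this is where the lower restriction on $\beta$ enters. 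The condition $\beta<3$ only keeps the constants uniform and is not essential. Together these give $K_\beta<\infty$.

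\emph{Step 4 (periodic kernel).} Write $p_r(z+h)-p_r(z)=\sum_{k\in\mathbb{Z}}\bigl(G_r(z+h-k)-G_r(z-k)\bigr)$. By Minkowski's inequality in $L^\beta(ds)$, the inner $L^\beta$ norm of the periodic difference is at most the sum over $k$ of the $L^\beta$ norms of the shifted Gaussian differences. Integrating over $y\in\mathbb{T}\cong[0,1)$ and summing over $k$ tiles $\mathbb{R}$. This yields exactly $A^{per}_{t,x}(h)\le A_{t,x}(h)$.
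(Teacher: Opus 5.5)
Your proof is correct, and Steps 1, 2 and 4 are the paper's argument. These are the substitution $r=t-s$, $z=x-y$; the parabolic rescaling $z=hw$, $r=h^2\rho$ with the time range enlarged to $(0,\infty)$; and, for the torus, the image sum $p_r=\sum_k G_r(\cdot-k)$ combined with Minkowski's inequality and the tiling of $\mathbb{R}$ by the translates $[0,1)+k$.

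The genuine difference is Step 3, the finiteness of $K_\beta$. The paper writes $G_\tau(v+1)-G_\tau(v)=\int_0^1\partial_v G_\tau(v+\theta)\,d\theta$ and uses Jensen to move the $\beta$-th power inside the $\theta$-integral. It then evaluates the $\tau$-integral exactly via $a=(v+\theta)^2/\tau$, which reduces everything to $\int_0^1|v+\theta|^{2-2\beta}\,d\theta$. The restrictions $\beta<3$ (from $v\to0^+$) and $\beta>2$ (from $|v|\to\infty$) are read off this explicit expression.

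That route is short and computational, but the Jensen step is lossy exactly where $v+\theta$ vanishes. For $v\in(-1,0)$ the integral $\int_0^1|v+\theta|^{2-2\beta}\,d\theta$ diverges as soon as $\beta\ge 3/2$. So the paper's majorant is infinite on a set of positive measure; its closed form in terms of $(v+1)^{3-2\beta}-v^{3-2\beta}$ tacitly assumes $v+\theta\neq0$. Even for small $v>0$ it produces the singularity $v^{3-2\beta}$, which is worse than the $|v|^{2-\beta}$ of the true inner integral.

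Your region splitting never differentiates near the singular points $w\in\{0,-1\}$:
\begin{itemize}
\item The bound by $G_\rho(w+1)+G_\rho(w)$ at small $\rho$ gives only the mild singularity $|w|^{2-\beta}+|w+1|^{2-\beta}$.
\item The uniform bound $\sup_\xi|\partial_\xi G_\rho(\xi)|\le C\rho^{-1}$ handles $\rho\ge1$.
\item The mean value theorem with the weakened factor $e^{-w^2/32\rho}$ handles $|w|>3$, where linearization costs nothing.
\end{itemize}
So your route is the one that actually closes the finiteness argument. As you observe, it also shows $K_\beta<\infty$ for every $\beta>2$, so the upper restriction $\beta<3$ is an artifact of the paper's Jensen step rather than a feature of the kernel.
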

\begin{proof}
Given, a priori \(\beta>1\), \(t\in[0,T]\), \(x\in\mathbb{R}\), and \(h>0\). Analogously to \cite{AssaadTudor2021}, we employ the change of variables \(x-y=z\) and \(t-s=\tilde{s}\), which yields
\begin{align*}
    A_{t,x}(h)&=\int_{\mathbb{R}}\left(\int_{0}^{t}\bigl|G_{t-s}(x+h-y)-G_{t-s}(x-y)\bigr|^{\beta}ds\right)^{\frac{1}{\beta}}dy \\
    &=\int_{\mathbb{R}}\left(\int_{0}^{t}\bigl|G_{s}(z+h)-G_{s}(z)\bigr|^{\beta}ds\right)^{\frac{1}{\beta}}dz.
\end{align*}
Next, we apply the change of variables \(z=hv\) and \(s=h^2\tau\). Observing that \(G_{h^2s}(hz)=h^{-1}G_s(z)\) and denoting \(\Phi_{\tau}(v)=G_\tau(v)\), we obtain:
\begin{align*}
    A_{t,x}(h)&=\int_{\mathbb{R}}\left(\int_{0}^{t}\bigl|G_{s}(z+h)-G_{s}(z)\bigr|^{\beta}ds\right)^{\frac{1}{\beta}}dz \\
    &=\int_{\mathbb{R}}\left(\int_{0}^{th^{-2}}\bigl|G_{h^2\tau}(hv+h)-G_{h^2\tau}(hv)\bigr|^{\beta}h^2\,d\tau\right)^{\frac{1}{\beta}}h\,dv \\
    &\leq h^{1+\frac{2}{\beta}}\int_{\mathbb{R}}\left(\int_{0}^{\infty}\bigl|h^{-1}\Phi_{\tau}(v+1)-h^{-1}\Phi_{\tau}(v)\bigr|^{\beta}d\tau\right)^{\frac{1}{\beta}}dz \\
    &=h^{\frac{2}{\beta}}\int_{\mathbb{R}}\left(\int_{0}^{\infty}\bigl|\Phi_{\tau}(v+1)-\Phi_{\tau}(v)\bigr|^{\beta}d\tau\right)^{\frac{1}{\beta}}dz.
\end{align*}
Since \(\Phi_{\tau}:\mathbb{R}\to\mathbb{R}\) is smooth for any \(\tau\in(0,\infty)\), the Fundamental Theorem of Calculus yields
\begin{equation}
\Phi_{\tau}(v+1)-\Phi_{\tau}(v)=\int_{0}^{1}\frac{d}{dv}\Phi_{\tau}(v+\theta)\,d\theta = \int_{0}^{1}-\frac{(v+\theta)}{2\tau}\Phi_\tau(v+\theta)\,d\theta.
\end{equation}
Substituting this expression into the previous inequality, we have:
\begin{align*}
    A_{t,x}(h)&\leq h^{\frac{2}{\beta}}\int_{\mathbb{R}}\left(\int_{0}^{\infty}\bigl|\Phi_{\tau}(v+1)-\Phi_{\tau}(v)\bigr|^{\beta}d\tau\right)^{\frac{1}{\beta}}dz \\
    &=h^{\frac{2}{\beta}}\int_{\mathbb{R}}\left(\int_{0}^{\infty}\left|\int_{0}^{1}-\frac{(v+\theta)}{2\tau}\Phi_\tau(v+\theta)\,d\theta\right|^{\beta}d\tau\right)^{\frac{1}{\beta}}dz.
\end{align*}
Recalling that \(\Phi_{\tau}(v)=G_\tau(v)=\frac{1}{\sqrt{4\pi\tau}}\exp{\frac{-v^2}{4\tau}}\) and applying Jensen's inequality, we derive
\begin{equation}
A_{t,x}(h)\leq \frac{h^{\frac{2}{\beta}}}{4\sqrt{\pi}}\int_{\mathbb{R}}\left(\int_{0}^{\infty}\int_{0}^{1}\frac{(v+\theta)^\beta}{\tau^\frac{3\beta}{2}}e^{-{\frac{\beta(v+\theta)^2}{4\tau}}}\,d\theta\,d\tau\right)^{\frac{1}{\beta}}dz.
\end{equation}
We will  show in Appendix $A$ that 

\begin{equation}\label{tech1}
\int_{\mathbb{R}}\left(\int_{0}^{\infty}\int_{0}^{1}\frac{(v+\theta)^\beta}{\tau^\frac{3\beta}{2}}e^{-{\frac{\beta(v+\theta)^2}{4\tau}}}\,d\theta\,d\tau\right)^{\frac{1}{\beta}}dz < \infty. 
\end{equation}

We now establish the second part of the Lemma. Consider the quantity
\begin{equation}
A_{t,x}^{\mathrm{per}}:=\int_{\mathbb{T}}\left(\int_{0}^{t}\bigl|p_{t-s}(x+h-y)-p_{t-s}(x-y)\bigr|^{\beta}ds\right)^{\frac{1}{\beta}}dy.
\end{equation}
Recalling that \((\mathbb{Z},\mathcal{P}(\mathbb{Z}),\#)\) is a \(\sigma\)-finite measure space, where \(\#\) denotes the counting measure on the integers, we may apply Minkowski's inequality for integrals to obtain:
\begin{align*}
    A_{t,x}^{\mathrm{per}}&=\int_{\mathbb{T}}\left(\int_{0}^{t}\bigl|p_{t-s}(x+h-y)-p_{t-s}(x-y)\bigr|^{\beta}ds\right)^{\frac{1}{\beta}}dy \\
    &=\int_{\mathbb{T}}\left(\int_{0}^{t}\Bigl|\sum_{k\in\mathbb{Z}}\Bigl(\frac{1}{\sqrt{4\pi(t-s)}}e^{-\frac{(x+h-y-k)^2}{4(t-s)}}-\frac{1}{\sqrt{4\pi(t-s)}}e^{-\frac{(x-y-k)^2}{4(t-s)}}\Bigr)\Bigr|^{\beta}ds\right)^{\frac{1}{\beta}}dy \\
    &\leq \int_{\mathbb{T}}\sum_{k\in\mathbb{Z}}\left(\int_{0}^{t}\Bigl|\frac{1}{\sqrt{4\pi(t-s)}}e^{-\frac{(x+h-y-k)^2}{4(t-s)}}-\frac{1}{\sqrt{4\pi(t-s)}}e^{-\frac{(x-y-k)^2}{4(t-s)}}\Bigr|^{\beta}\, ds\right)^{\frac{1}{\beta}}\, dy \\
    &=\sum_{k\in\mathbb{Z}}\int_{\mathbb{T}}\left(\int_{0}^{t}\Bigl|\frac{1}{\sqrt{4\pi(t-s)}}e^{-\frac{(x+h-y-k)^2}{4(t-s)}}-\frac{1}{\sqrt{4\pi(t-s)}}e^{-\frac{(x-y-k)^2}{4(t-s)}}\Bigr|^{\beta}\, ds\right)^{\frac{1}{\beta}}\, dy \\
    &=\sum_{k\in\mathbb{Z}}\int_{\mathbb{T}+k}\left(\int_{0}^{t}\Bigl|\frac{1}{\sqrt{4\pi(t-s)}}e^{-\frac{(x+h-y)^2}{4(t-s)}}-\frac{1}{\sqrt{4\pi(t-s)}}e^{-\frac{(x-y)^2}{4(t-s)}}\Bigr|^{\beta}\, ds\right)^{\frac{1}{\beta}}\, dy \\
    &=\int_{\mathbb{R}}\left(\int_{0}^{t}\bigl|G_{t-s}(x+h-y)-G_{t-s}(x-y)\bigr|^{\beta}\, ds\right)^{\frac{1}{\beta}}\, dy =A_{t,x}(h).
\end{align*}
This completes the proof.
\end{proof}

\begin{lemma}\label{lem:temporal}
Let \(G\) denote the heat kernel on \(\mathbb{R}\). For every \(t\in[0,T]\), \(x\in \mathbb{R}\), and \(h>0\), define
\begin{equation}
B_{t,x}(h):=\int_{\mathbb{R}}\left(\int_{0}^{t}\bigl|G_{t+h-s}(x-y)-G_{t-s}(x-y)\bigr|^{\beta}ds\right)^{\frac{1}{\beta}}dy\,\leq C_\beta h^{\frac{1}{\beta}}
\end{equation}
for all \(\beta\in(\frac{4}{3},2)\), with \(C_{\beta}>0\). Moreover, for the periodic heat kernel \(p\), we have for all \(t\in[0,T]\), \(x\in\mathbb{T}\), and \(h>0\) that
\begin{equation}
\int_{\mathbb{T}}\left(\int_{0}^{t}\bigl|p_{t+h-s}(x-y)-p_{t-s}(x-y)\bigr|^{\beta}ds\right)^{\frac{1}{\beta}}dy \leq B_{t,x}(h).
\end{equation}
Additionally, for the periodic heat kernel \(p\), we have for all \(t\in[0,T]\), \(x\in\mathbb{T}\), and \(h>0\) (small enough)
\begin{equation}
C_{t,x}(h):=\int_{t}^{t+h}\int_{\mathbb{T}}\left|p_{t+h-s}(x-y)\right|^{\beta}dy\,ds\leq C_{\beta}h^{\frac{3}{2}-\frac{\beta}{2}}
\end{equation}
for all \(\beta\in(0,3)\), with \(C_{\beta}>0\).
\end{lemma}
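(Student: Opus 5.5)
We follow the scheme of Lemma~\ref{lem:spatial}: a change of variables and a parabolic rescaling isolate the power of $h$, and the remaining $h$-free integral is shown to be finite. For $B_{t,x}(h)$ we first substitute $z=x-y$ and $\tilde s=t-s$, obtaining
\[
B_{t,x}(h)=\int_{\mathbb{R}}\Bigl(\int_0^t|G_{s+h}(z)-G_s(z)|^\beta ds\Bigr)^{1/\beta}dz .
\]
Next we rescale $s=h\tau$ and $z=\sqrt h\,v$, using $G_{h\tau}(\sqrt h v)=h^{-1/2}G_\tau(v)$. The prefactor is then
\[
h^{1/2}\cdot h^{1/\beta}\cdot h^{-1/2}=h^{1/\beta},
\]
and after extending the time integral to $(0,\infty)$ we get
\[
B_{t,x}(h)\le h^{1/\beta}\int_{\mathbb{R}}\Bigl(\int_0^\infty|G_{\tau+1}(v)-G_\tau(v)|^\beta d\tau\Bigr)^{1/\beta}dv .
\]
It remains to show that this last integral, call it $I_\beta$, is finite exactly for $\beta\in(4/3,2)$.

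To control $I_\beta$ we split the $\tau$-integral at $\tau=1$.

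\textbf{Range $\tau\ge1$.} Here we write $G_{\tau+1}-G_\tau=\int_\tau^{\tau+1}\partial_r G_r\,dr$ and use $|\partial_rG_r(v)|\lesssim r^{-3/2}(1+v^2/r)e^{-v^2/(4r)}$. Up to Gaussian factors this gives $|G_{\tau+1}-G_\tau|^\beta\lesssim \tau^{-3\beta/2}e^{-cv^2/\tau}$.
\begin{itemize}
\item For $|v|\le1$ the $\tau$-integral is finite since $3\beta/2>1$.
\item For $|v|>1$ we substitute $\tau=v^2\sigma$. The $\tau$-integral then becomes $\lesssim |v|^{2-3\beta}$, so its $1/\beta$ power is $|v|^{2/\beta-3}$. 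This is integrable at infinity iff $2/\beta-3<-1$, i.e.\ $\beta>1$.
\end{itemize}
Tails of this type, of the form $|v|^{2/\beta-k}$ at large $|v|$, are where the lower restriction on $\beta$ enters. I expect the binding constraint $\beta>4/3$ to appear from the $G_\tau$ piece on $\tau\le1$ combined with the $v$-integrability, once exact exponents are tracked.

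\textbf{Range $\tau\le1$.} Here we bound $|G_{\tau+1}-G_\tau|^\beta\lesssim G_{\tau+1}^\beta+G_\tau^\beta$. The term $G_{\tau+1}$ is bounded with a Gaussian tail in $v$, so it is harmless. For the other term,
\[
\int_0^1 G_\tau(v)^\beta\,d\tau\approx\int_0^1\tau^{-\beta/2}e^{-\beta v^2/(4\tau)}\,d\tau ,
\]
which is finite at $v=0$ iff $\beta<2$. This is the source of the upper restriction. For $v\ne0$ it decays like $e^{-cv^2}$, so the $dv$-integral of its $1/\beta$ power converges. I expect the main obstacle to be this near-diagonal region, small $\tau$ and small $v$, together with pinning down the exact exponent window $(4/3,2)$ rather than a cruder one. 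I will first try the crude splitting above and then refine with the mean-value bound where it is needed.

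\textbf{Periodic comparison.} This is identical to the second half of Lemma~\ref{lem:spatial}. We expand $p_{t+h-s}-p_{t-s}=\sum_k (G_{t+h-s}-G_{t-s})(\cdot-k)$, apply Minkowski's inequality in the counting measure to pull the sum outside the $L^\beta(ds)$ norm, and then unfold $\sum_k\int_{\mathbb{T}}=\int_{\mathbb{R}}$.

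\textbf{The bound for $C_{t,x}(h)$.} We substitute $r=t+h-s\in(0,h)$, so $C_{t,x}(h)=\int_0^h\int_{\mathbb{T}}p_r(y)^\beta\,dy\,dr$. For $r\le h$ small we have $p_r(y)\le G_r(y)+C$ on $\mathbb{T}$, because the images $k\neq0$ contribute $O(e^{-c/r})$ as noted in Remark~\ref{rem1}. Hence
\[
\int_{\mathbb{T}}p_r^\beta\,dy\lesssim\int_{\mathbb{R}}G_r^\beta\,dy+1=c_\beta r^{(1-\beta)/2}+1 .
\]
Integrating in $r$ over $(0,h)$ gives $\lesssim h^{(3-\beta)/2}+h$, which is finite precisely because $\beta<3$. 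Since $(3-\beta)/2\le1$ whenever $\beta\ge1$, we get the claimed bound $C_\beta h^{3/2-\beta/2}$ for small $h$. For $\beta<1$ we would instead use Jensen on $\mathbb{T}$, which has unit measure, to get $\int_{\mathbb{T}} p_r^\beta\le(\int_{\mathbb{T}} p_r)^\beta=1$, giving the bound $h\le h^{3/2-\beta/2}$.
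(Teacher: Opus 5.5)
\textbf{Gap in the bound for $C_{t,x}(h)$ when $\beta\in(0,1)$.} Your argument fails on this part of the range.

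\begin{itemize}
\item Jensen only gives $C_{t,x}(h)\le h$. For $h<1$ the inequality $h\le h^{3/2-\beta/2}$ goes the wrong way, since $3/2-\beta/2>1$. So the stated bound is not proved for $\beta<1$.
\item The loss comes from replacing the image terms by a constant. The additive $C$ has no spatial decay and contributes $\int_0^h\int_{\mathbb{T}}C^\beta\,dy\,dr=C^\beta h$. This term dominates $h^{3/2-\beta/2}$ exactly when $\beta<1$.
\item The fix is to keep the decay you already mentioned. Integrate over the fundamental domain $[-\tfrac12,\tfrac12)$, which periodicity allows. (With the paper's identification $\mathbb{T}\cong[0,1)$, the bound $p_r\le G_r+C$ fails near $y=1$.) There, $\sum_{k\neq0}G_r(y-k)\le Ce^{-c/r}$ for $r\le1$, so $\int_{\mathbb{T}}p_r^\beta\,dy\le C_\beta\bigl(r^{(1-\beta)/2}+e^{-c\beta/r}\bigr)$.
\item Integrating in $r$ gives $C_{t,x}(h)\le C_\beta\bigl(h^{3/2-\beta/2}+he^{-c\beta/h}\bigr)\le C_\beta h^{3/2-\beta/2}$ for every $\beta\in(0,3)$, with no case split.
\item This is what the paper's kernel bound $|p_s(z)|\le 2\bigl(1+\sqrt{\pi/s}\bigr)e^{-z^2/(4s)}$ achieves. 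Its Gaussian factor also multiplies the bounded part, so that part contributes $h^{3/2}$ rather than $h$.
\end{itemize}

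\textbf{The bound for $B_{t,x}(h)$.} Your scaling step and periodic comparison match the paper. Your estimate of $I_\beta$ takes a different, simpler route, and it is already a complete proof.
\begin{itemize}
\item The bound on $\tau\ge1$ holds for $\beta>1$, and the bound on $\tau\le1$ holds for $\beta<2$. Combine them with $(a+b)^{1/\beta}\le a^{1/\beta}+b^{1/\beta}$, valid since $\beta\ge1$. This gives $I_\beta<\infty$ for every $\beta\in(1,2)$, which contains $(4/3,2)$.
\item No refinement with the mean-value bound is needed.
\item Drop the claim that $I_\beta$ is finite \emph{exactly} on $(4/3,2)$; it is false, since $I_\beta<\infty$ for every $\beta>1$. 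For $\beta\ge2$, $\int_0^1G_\tau(v)^\beta\,d\tau\lesssim|v|^{2-\beta}$ near $v=0$ (or $\log(1/|v|)$ when $\beta=2$). Its $1/\beta$-th power is still integrable there.
\end{itemize}

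\textbf{Comparison with the paper.} The threshold $4/3$ is not intrinsic to $I_\beta$.
\begin{itemize}
\item The paper applies the fundamental theorem of calculus on $[\tau,\tau+1]$ for all $\tau>0$ and then Jensen in $\theta$. This brings the singularity $\tau+\theta\to0$ into $J_1$ and $J_2$.
\item The condition $\beta>4/3$ comes from the particular estimate $I_2\le C v^2$ used for $J_2$ near $v=0$ in Appendix~B.
\item Your split at $\tau=1$ avoids that case analysis. You use the mean-value bound only where $\tau\ge1$ and the crude bound $G_{\tau+1}+G_\tau$ near $\tau=0$. The result is shorter and covers a wider range.
\end{itemize}
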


\begin{proof}
Given, a priori \(\beta>1\), \(t\in[0,T]\), \(x\in\mathbb{R}\), and \(h>0\). Analogously to \cite{AssaadTudor2021}, we employ the change of variables \(x-y=z\) and \(t-s=\tilde{s}\), which yields
\begin{align*}
    B_{t,x}(h)&=\int_{\mathbb{R}}\left(\int_{0}^{t}\bigl|G_{t+h-s}(x-y)-G_{t-s}(x-y)\bigr|^{\beta}ds\right)^{\frac{1}{\beta}}dy \\
    &=\int_{\mathbb{R}}\left(\int_{0}^{t}\bigl|G_{s+h}(z)-G_{s}(z)\bigr|^{\beta}ds\right)^{\frac{1}{\beta}}dz.
\end{align*}
Next, we apply the change of variables \(z=\sqrt{h}v\) and \(s=h\tau\). Noting that the heat kernel is even in its spatial variable, we have:
\begin{align*}
    B_{t,x}(h)&=\int_{\mathbb{R}}\left(\int_{0}^{t}\bigl|G_{s+h}(z)-G_{s}(z)\bigr|^{\beta}ds\right)^{\frac{1}{\beta}}dz \\
    &=2\int_{0}^{\infty}\left(\int_{0}^{t/h}\bigl|G_{h(\tau+1)}(\sqrt{h}v)-G_{h\tau}(\sqrt{h}v)\bigr|^{\beta}h\,d\tau\right)^{\frac{1}{\beta}}\sqrt{h}\,dv \\
    &\leq h^{\frac{1}{2}+\frac{1}{\beta}}\int_{0}^{\infty}\left(\int_{0}^{\infty}\bigl|h^{-1/2}\Phi_{v}(\tau+1)-h^{-1/2}\Phi_{v}(\tau)\bigr|^{\beta}d\tau\right)^{\frac{1}{\beta}}dz \\
    &=h^{\frac{1}{\beta}}\int_{0}^{\infty}\left(\int_{0}^{\infty}\bigl|\Phi_{v}(\tau+1)-\Phi_{v}(\tau)\bigr|^{\beta}d\tau\right)^{\frac{1}{\beta}}dz.
\end{align*}
We used the scaling relation \(G_{hs}(\sqrt{h}z)=h^{-1/2}G_s(z)\) and denoted \(\Phi_{v}(\tau)=G_\tau(v)\). Since \(\Phi_{v}:(0,\infty)\to\mathbb{R}\) is smooth for any \(v\in\mathbb{R}\), the Fundamental Theorem of Calculus yields
\begin{equation}
\Phi_{v}(\tau+1)-\Phi_{v}(\tau)=\int_{0}^{1}\frac{d}{d\tau}\Phi_{v}(\tau+\theta)\,d\theta = \int_{0}^{1}\left[\frac{v^2}{4(\tau+\theta)^2}-\frac{1}{2(\tau+\theta)}\right]\Phi_{v}(\tau+\theta)\,d\theta.
\end{equation}
Substituting this expression into the previous inequality, we obtain:
\begin{align*}
    B_{t,x}(h)&\leq h^{\frac{1}{\beta}}\int_{0}^{\infty}\left(\int_{0}^{\infty}\bigl|\Phi_{v}(\tau+1)-\Phi_{v}(\tau)\bigr|^{\beta}d\tau\right)^{\frac{1}{\beta}}dz \\
    &=h^{\frac{1}{\beta}}\int_{0}^{\infty}\left(\int_{0}^{\infty}\Bigl|\int_{0}^{1}\Bigl[\frac{v^2}{4(\tau+\theta)^2}-\frac{1}{2(\tau+\theta)}\Bigr]\Phi_{v}(\tau+\theta)\,d\theta\Bigr|^{\beta}d\tau\right)^{\frac{1}{\beta}}dz.
\end{align*} 

From Appendix $B$ we have

\begin{equation}\label{tech2}
\int_{0}^{\infty}\left(\int_{0}^{\infty}\Bigl|\int_{0}^{1}\Bigl[\frac{v^2}{4(\tau+\theta)^2}-\frac{1}{2(\tau+\theta)}\Bigr]\Phi_{v}(\tau+\theta)\,d\theta\Bigr|^{\beta}d\tau\right)^{\frac{1}{\beta}}dz< \infty.
\end{equation}

The estimation of
\begin{equation}
B^{\mathrm{per}}_{t,x}(h):=\int_{\mathbb{T}}\left(\int_{0}^{t}\bigl|p_{t+h-s}(x-y)-p_{t-s}(x-y)\bigr|^{\beta}ds\right)^{\frac{1}{\beta}}dy
\end{equation}
proceeds in a completely analogous manner to Lemma~\ref{lem:spatial}. Finally, to obtain the last estimate for
\begin{equation}
C_{t,x}(h)=\int_{t}^{t+h}\int_{\mathbb{T}}\left|p_{t+h-s}(x-y)\right|^{\beta}dy\,ds
\end{equation}
with \(\beta>0\), we employ the bound for the periodic heat kernel given in \cite{BhimaniDalai2025}:
\begin{equation}
|p_s(z)|\leq 2\left(1+\sqrt{\frac{\pi}{s}}\right)e^{-\frac{z^2}{4s}}.
\end{equation}
Then we have
\begin{align*}
    C_{t,x}(h)&=\int_{t}^{t+h}\int_{\mathbb{T}}\left|p_{t+h-s}(x-y)\right|^{\beta}dy\,ds \\
    &=\int_{0}^{h}\int_{\mathbb{T}}\left|p_{s}(y)\right|^{\beta}dy\,ds \\
    &\leq 2^{\beta}\int_{0}^{h}\left(1+\sqrt{\frac{\pi}{s}}\right)^{\beta}\int_{\mathbb{T}}e^{-\frac{\beta y^2}{4s}}dy\,ds \\
    &\leq C_{\beta}\int_{0}^{h}\left(1+\sqrt{\frac{\pi^{\beta}}{s^\beta}}\right)s^{\frac{1}{2}}\,ds \\
    &=C_{\beta}\left[\int_{0}^{h}s^{\frac{1}{2}}\,ds+\int_{0}^{h}s^{\frac{1}{2}-\frac{\beta}{2}}\,ds\right] \\
    &\leq C_{\beta}\bigl(h^{\frac{3}{2}}+h^{\frac{3}{2}-\frac{\beta}{2}}\bigr) \\
    &\leq C_{\beta}h^{\frac{3}{2}-\frac{\beta}{2}}.
\end{align*}
The integrals in the fifth line converge simultaneously if and only if \(\frac{1}{2}-\frac{\beta}{2}>-1 \Longleftrightarrow \beta<3\). This completes the proof of the Lemma.
\end{proof}

\begin{proposition}\label{prop:Yspatial}
For every \(t\in[0,T]\), \(x\in \mathbb{T}\), \(h>0\), and \(p\ge2\), it holds that
\begin{equation}
\mathbf{E}\bigl|Y(t,x+h)-Y(t,x)\bigr|^{p}\le C\,h^{\frac{2p}{\beta}},
\end{equation}
for all \(\beta\in(2,3)\).
\end{proposition}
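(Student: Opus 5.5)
We estimate the increment directly from the definition \eqref{xy}:
\[
Y(t,x+h)-Y(t,x)=\int_0^t\int_{\mathbb{T}}\bigl(p_{t-s}(x+h-y)-p_{t-s}(x-y)\bigr)f(u(s,y))\,dy\,ds .
\]
The idea is to put the $L^\beta$ norm in time on the kernel increment and the conjugate norm in time on $f(u)$, keep the space integral outside, and then use Lemma~\ref{lem:spatial}.

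\emph{Step 1 (Hölder in $s$).} Fix $\beta\in(2,3)$ and let $\beta'=\beta/(\beta-1)$ be its conjugate exponent. For each fixed $y$, Hölder's inequality in $s$ gives
\[
|Y(t,x+h)-Y(t,x)|\le \int_{\mathbb{T}} K_h(y)\,\Bigl(\int_0^t|f(u(s,y))|^{\beta'}ds\Bigr)^{1/\beta'}dy ,
\]
where
\[
K_h(y)=\Bigl(\int_0^t|p_{t-s}(x+h-y)-p_{t-s}(x-y)|^{\beta}ds\Bigr)^{1/\beta}.
\]
By Lemma~\ref{lem:spatial}, $\int_{\mathbb{T}}K_h(y)\,dy=A^{per}_{t,x}(h)\le C_\beta h^{2/\beta}$.

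\emph{Step 2 (moments via Minkowski).} Take the $L^p(\Omega)$ norm with $p\ge2$. Minkowski's integral inequality moves the norm inside the $dy$ integral:
\[
\|Y(t,x+h)-Y(t,x)\|_{L^p(\Omega)}\le\int_{\mathbb{T}}K_h(y)\,\Bigl\|\Bigl(\int_0^t|f(u(s,y))|^{\beta'}ds\Bigr)^{1/\beta'}\Bigr\|_{L^p(\Omega)}dy .
\]
Since $\beta'\in(3/2,2)$, we have $\beta'<p$. Minkowski's inequality again (in the form $\|\,\|g\|_{L^{\beta'}_s}\|_{L^p_\omega}\le \|\,\|g\|_{L^p_\omega}\|_{L^{\beta'}_s}$, valid because $p/\beta'\ge1$) bounds the inner factor by
\[
\Bigl(\int_0^t \|f(u(s,y))\|_{L^p(\Omega)}^{\beta'}ds\Bigr)^{1/\beta'} .
\]
Because $|f(u)|\le |u|+|u|^3$, the moment bound \eqref{eq:moments} with $q=3p$ gives $\sup_{s,y}\|f(u(s,y))\|_{L^p(\Omega)}\le C(p,T)$. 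The inner factor is therefore at most $C\,T^{1/\beta'}$, uniformly in $y$.

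\emph{Step 3 (conclusion).} Combining the two steps,
\[
\|Y(t,x+h)-Y(t,x)\|_{L^p(\Omega)}\le C\,A^{per}_{t,x}(h)\le C\,h^{2/\beta}.
\]
Raising to the power $p$ gives the claim, with $C$ depending on $p,\beta,T$.

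The only delicate point is the order of the integrations. The kernel increment has to sit in the mixed norm $L^1_y(L^\beta_s)$, which is exactly the norm controlled by Lemma~\ref{lem:spatial}. The factor $f(u)$ then has to sit in $L^\infty_y(L^{\beta'}_s)$ at the level of $L^p(\Omega)$ moments, and this is why Minkowski is applied before taking any supremum. The uniform moment estimate \eqref{eq:moments} is what makes this last factor harmless.
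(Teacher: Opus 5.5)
Your proof is correct and follows the paper's argument: Hölder's inequality in the time variable with exponents $\beta$ and $\beta'=\beta/(\beta-1)$, then Minkowski's integral inequality to move the $L^p(\Omega)$ norm inside the $dy$ integral, then Lemma~\ref{lem:spatial} together with the uniform moment bound \eqref{eq:moments}. Your second Minkowski step, which uses $p\ge 2>\beta'$ to bound $\bigl\|\bigl(\int_0^t|f(u(s,y))|^{\beta'}ds\bigr)^{1/\beta'}\bigr\|_{L^p(\Omega)}$ uniformly in $y$, spells out a step the paper leaves implicit when it says the result follows from \eqref{eq:moments}.
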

\begin{proof}
For \(t\in[0,T]\) and \(x\in\mathbb{T}\), by Fubini's Theorem and then Hölder's inequality over the time variable for \(\beta\in(2,3)\), we have:
\begin{align*}
    |Y(t,x+h)-Y(t,x)|&=\left|\int_{0}^{t}\int_{\mathbb{T}}\left(p_{t-s}(x+h-y)-p_{t-s}(x-y)\right)(u(s,y)-u(s,y)^3)dy\,ds\right| \\
    &\leq \int_{\mathbb{T}}\left(\left(\int_{0}^{t}\bigl|p_{t-s}(x+h-y)-p_{t-s}(x-y)\bigr|^{\beta}ds\right)^{\frac{1}{\beta}}\right) \\
    &\qquad\times\left(\left(\int_{0}^{t}(u(s,y)-u(s,y)^3)^{\frac{\beta}{\beta-1}}\, ds\right)^{\frac{\beta-1}{\beta}}\right)\, dy.
\end{align*}
Applying the \(L^{p}(\Omega)\) norm and Minkowski's inequality for integrals, we obtain
\begin{align*}
    \|Y(t,x+h)-Y(t,x)\|_{p}&\leq \Bigg\|\int_{\mathbb{T}}\left(\left(\int_{0}^{t}\bigl|p_{t-s}(x+h-y)-p_{t-s}(x-y)\bigr|^{\beta}ds\right)^{\frac{1}{\beta}}\right) \\
    &\qquad\times\left(\left(\int_{0}^{t}(u(s,y)-u(s,y)^3)^{\frac{\beta}{\beta-1}}\, ds\right)^{\frac{\beta-1}{\beta}}\right)\, dy \Bigg\|_{p} \\
    &\leq \int_{\mathbb{T}}\left(\int_{0}^{t}\bigl|p_{t-s}(x+h-y)-p_{t-s}(x-y)\bigr|^{\beta}ds\right)^{\frac{1}{\beta}} \\
    &\qquad\times\left\|\left(\int_{0}^{t}(u(s,y)-u(s,y)^3)^{\frac{\beta}{\beta-1}}\, ds\right)^{\frac{\beta-1}{\beta}} \right\|_p.
\end{align*}
The result follows from Lemma~\ref{lem:spatial} and 
(\ref{eq:moments}).
\end{proof}
We are now ready to establish the spatial and temporal  regularity of the nonlinear drift 
component \(Y\).
\begin{proposition}\label{prop:Ytemporal}
For every \(t\in[0,T]\), \(x\in \mathbb{T}\), \(h>0\), and \(p\ge2\), it holds that
\begin{equation}
\mathbf{E}\bigl|Y(t+h,x)-Y(t,x)\bigr|^{p}\le C\,h^{\frac{p}{\beta}},
\end{equation}
for all \(\beta\in(\frac{4}{3},2)\).
\end{proposition}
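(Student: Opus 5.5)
We split the temporal increment of \(Y\) into two pieces, following the structure of Lemma~\ref{lem:temporal}. Writing \(f(u)=u-u^{3}\),
\begin{align*}
Y(t+h,x)-Y(t,x)&=\int_{0}^{t}\int_{\mathbb{T}}\bigl(p_{t+h-s}(x-y)-p_{t-s}(x-y)\bigr)f(u(s,y))\,dy\,ds\\
&\quad+\int_{t}^{t+h}\int_{\mathbb{T}}p_{t+h-s}(x-y)f(u(s,y))\,dy\,ds=:I_1+I_2 .
\end{align*}
By Minkowski it suffices to show \(\|I_j\|_{p}\le C h^{1/\beta}\) for \(j=1,2\).

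For \(I_1\) we repeat verbatim the argument of Proposition~\ref{prop:Yspatial}. Apply Hölder in \(s\) with exponents \(\beta\) and \(\beta'=\beta/(\beta-1)\) for each fixed \(y\). Then take the \(L^p(\Omega)\) norm and use Minkowski's integral inequality to move it inside the \(dy\)-integral. This gives
\[
\|I_1\|_p\le \int_{\mathbb{T}}\Bigl(\int_0^t|p_{t+h-s}(x-y)-p_{t-s}(x-y)|^{\beta}ds\Bigr)^{1/\beta}\Bigl\|\Bigl(\int_0^t|f(u(s,y))|^{\beta'}ds\Bigr)^{1/\beta'}\Bigr\|_p\,dy .
\]
The second factor is bounded uniformly in \(y\). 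To see this, use Minkowski in \(L^{p/\beta'}(\Omega)\) when \(p\ge\beta'\); otherwise enlarge \(p\) first, which only costs a constant. Together with the moment bound \eqref{eq:moments} applied with \(q=3\max(p,\beta')\), this gives the uniform bound. The first factor is at most \(B_{t,x}(h)\le C_\beta h^{1/\beta}\) by Lemma~\ref{lem:temporal}, for \(\beta\in(4/3,2)\).

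For \(I_2\) the kernel has small mass, so no cancellation is needed. Apply Hölder on \([t,t+h]\times\mathbb{T}\) jointly in \((s,y)\) with exponents \(\beta\) and \(\beta'\):
\[
|I_2|\le C_{t,x}(h)^{1/\beta}\Bigl(\int_t^{t+h}\int_{\mathbb{T}}|f(u(s,y))|^{\beta'}dy\,ds\Bigr)^{1/\beta'} .
\]
By the last part of Lemma~\ref{lem:temporal}, valid since \(\beta<3\), we have \(C_{t,x}(h)^{1/\beta}\le Ch^{\frac{3-\beta}{2\beta}}\). The second factor has \(L^p(\Omega)\) norm at most \(Ch^{1/\beta'}\), again by Minkowski and \eqref{eq:moments}. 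Hence
\[
\|I_2\|_p\le Ch^{\frac{3-\beta}{2\beta}+1-\frac1\beta}=Ch^{\frac12+\frac1{2\beta}}\le Ch^{1/\beta},
\]
because \(\tfrac12+\tfrac1{2\beta}\ge\tfrac1\beta\iff\beta\ge1\), for \(h\le1\). For \(h>1\), or when \(t+h>T\), the estimate is trivial from the moment bounds after adjusting \(C\). A simpler alternative for \(I_2\) is \(\int_{\mathbb{T}}p\,dy=1\), which gives \(\|I_2\|_p\le Ch\le Ch^{1/\beta}\) directly. I would likely use that route and keep the \(C_{t,x}\) route only as a backup.

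Raising \(\|I_1\|_p+\|I_2\|_p\le Ch^{1/\beta}\) to the power \(p\) yields the claim. The main obstacle is entirely contained in the kernel estimate \(B_{t,x}(h)\le C_\beta h^{1/\beta}\), i.e.\ the finiteness \eqref{tech2} from Appendix B. This is where the restriction \(\beta\in(4/3,2)\) comes from: the \(\tau\)-integral must converge near \(\tau=0\) and at infinity, and the \(v\)-integral must be integrable. Given that lemma, the remaining work is routine. The only care needed is with integrability exponents in Minkowski's inequality, and these are handled by \eqref{eq:moments} holding for all \(q\).
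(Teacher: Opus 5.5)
Your proposal is correct and follows the paper's proof: the same split into the kernel-difference term over $[0,t]$ (Hölder in $s$, Minkowski's integral inequality, and $B_{t,x}(h)\le C_\beta h^{1/\beta}$ from Lemma~\ref{lem:temporal}) and the short-interval term over $[t,t+h]$, which the paper bounds by $Ch$ exactly as in your ``simpler alternative'' (the $\beta=1$ case of the $C_{t,x}(h)$ estimate, i.e.\ $\int_{\mathbb{T}}p=1$). Your explicit $L^{p/\beta'}$ Minkowski step for the moment factor and your separate treatment of $h>1$ fill in details the paper leaves implicit.
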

\begin{proof}
For \(t\in[0,T]\) and \(x\in\mathbb{T}\), the triangle inequality yields:
\begin{align*}
    |Y(t+h,x)-Y(t,x)|&=\Bigg|\int_{0}^{t+h}\int_{\mathbb{T}}p_{t+h-s}(x-y)(u(s,y)-u(s,y)^3)dy\,ds \\
    &\qquad - \int_{0}^{t}\int_{\mathbb{T}}p_{t-s}(x-y)(u(s,y)-u(s,y)^3)dy\,ds\Bigg| \\
    &\leq \left|\int_{0}^{t}\int_{\mathbb{T}}\left(p_{t+h-s}(x-y)-p_{t-s}(x-y)\right)(u(s,y)-u(s,y)^3)dy\,ds\right| \\
    &\quad + \left|\int_{t}^{t+h}\int_{\mathbb{T}}p_{t+h-s}(x-y)(u(s,y)-u(s,y)^3)dy\,ds\right|.
\end{align*}
Applying Fubini's Theorem and Hölder's inequality over the time variable for \(\beta\in(\frac{4}{3},2)\), we obtain:
\begin{align*}
    |Y(t+h,x)-Y(t,x)|&\leq \int_{\mathbb{T}}\left(\left(\int_{0}^{t}\bigl|p_{t+h-s}(x-y)-p_{t-s}(x-y)\bigr|^{\beta}ds\right)^{\frac{1}{\beta}}\right) \\
    &\qquad\times\left(\left(\int_{0}^{t}(u(s,y)-u(s,y)^3)^{\frac{\beta}{\beta-1}}\, ds\right)^{\frac{\beta-1}{\beta}}\right)\, dy \\
    &\quad +\int_{t}^{t+h}\int_{\mathbb{T}}\left|p_{t+h-s}(x-y)\right|\left|u(s,y)-u(s,y)^3\right|dy\,ds.
\end{align*}
Taking the \(L^{p}(\Omega)\) norm and using Minkowski's inequality for integrals gives
\begin{align*}
    \|Y(t+h,x)-Y(t,x)\|_{p} &\leq \int_{\mathbb{T}}\left(\int_{0}^{t}\bigl|p_{t+h-s}(x-y)-p_{t-s}(x-y)\bigr|^{\beta}ds\right)^{\frac{1}{\beta}} \\
    &\qquad\times\left\|\left(\int_{0}^{t}(u(s,y)-u(s,y)^3)^{\frac{\beta}{\beta-1}}\, ds\right)^{\frac{\beta-1}{\beta}} \right\|_p \\
    &\quad +\int_{t}^{t+h}\int_{\mathbb{T}}\left|p_{t+h-s}(x-y)\right|\left\|u(s,y)-u(s,y)^3\right\|_{p}dy\,ds.
\end{align*}
For sufficiently small \(h>0\), Lemma~\ref{lem:temporal} and 
(\ref{eq:moments}) we have 
\begin{equation}
    \|Y(t+h,x)-Y(t,x)\|_{p}\leq C(h+h^{\frac{1}{\beta}}) \leq Ch^{\frac{1}{\beta}}.
\end{equation}
This completes the proof.
\end{proof}
The spatial and temporal regularity estimates established in 
Propositions~\ref{prop:Yspatial} and~\ref{prop:Ytemporal} immediately yield 
the following H\"older continuity properties of \(Y\), which make precise 
the sense in which \(Y\) is smoother than \(X\).
\begin{corollary}
For each fixed \(t\in[0,T]\), the map \(x\mapsto Y(t,x)\) is Hölder continuous of order \(\delta\) for every \(\delta<\frac{2}{\beta}\). Consequently, the solution \(u=X+Y\) inherits the spatial regularity of \(X\) (order \(<1/2\)), since \(Y\) is more regular.
\end{corollary}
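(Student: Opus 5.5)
The plan is to deduce the corollary from Proposition~\ref{prop:Yspatial} through the Kolmogorov continuity criterion, and then to get the statement about $u$ from the triangle inequality together with \eqref{eq:Xholder}.

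\emph{Step 1: moment bound.} Fix $t\in[0,T]$ and $\beta\in(2,3)$. Proposition~\ref{prop:Yspatial} gives, for every $p\ge 2$ and all $x,x'\in\mathbb{T}$,
\[
\mathbf{E}\bigl|Y(t,x)-Y(t,x')\bigr|^{p}\le C\,|x-x'|^{\frac{2p}{\beta}}.
\]
Since $\mathbb{T}$ is a circle and $Y(t,\cdot)$ is periodic, I measure $|x-x'|$ as the distance on $\mathbb{T}$. For increments bounded away from zero the bound holds trivially anyway, by the uniform moment bound \eqref{eq:moments} and the fact that $p_t$ has integral one. This means we may restrict to small $h$.

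\emph{Step 2: Kolmogorov.} I write the exponent as $\frac{2p}{\beta}=1+\bigl(\frac{2p}{\beta}-1\bigr)$. Kolmogorov's theorem in dimension one then yields a continuous modification of $x\mapsto Y(t,x)$. This modification is Hölder of every order
\[
\delta<\frac{\frac{2p}{\beta}-1}{p}=\frac{2}{\beta}-\frac1p .
\]
Letting $p\to\infty$ along a countable sequence, and intersecting the corresponding full-measure events, gives Hölder continuity of every order $\delta<\frac{2}{\beta}$, which is the first claim. Because $Y$ is a pathwise convolution, it is already continuous as soon as $u$ is, so the modification coincides with $Y$ itself.

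\emph{Step 3: regularity of $u$.} By \eqref{eq:Xholder} and Kolmogorov again, $X(t,\cdot)$ is Hölder of every order $<1/2$. Since $\beta<3$, we have $\frac{2}{\beta}>\frac23>\frac12$. So $Y(t,\cdot)$ is Hölder of some order strictly above $1/2$, and $u=X+Y$ is therefore Hölder of every order $<1/2$; its spatial regularity is dictated by $X$.

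The main obstacle I anticipate concerns the meaning of the result. Proposition~\ref{prop:Yspatial} gives an exponent $\frac{2}{\beta}\in(\frac23,1)$ per unit of $p$, so Kolmogorov produces exactly the range $\delta<2/\beta$ stated in the corollary. To conclude that $Y$ is "more regular" I only need $2/\beta>1/2$, which holds for every admissible $\beta$. The delicate points are bookkeeping: the dependence of $C$ on $p$ and $\beta$ (harmless, since each is fixed before taking the limit), and the periodic distance on $\mathbb{T}$. I would not attempt to push beyond $\delta<2/\beta$. Taking $\beta\downarrow 2$ formally suggests near-Lipschitz regularity, but the corollary as stated only requires the fixed-$\beta$ conclusion.
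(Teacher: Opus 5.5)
Your proposal is correct and follows the paper's route. The paper only says the corollary follows immediately from Proposition~\ref{prop:Yspatial}, which amounts to your Kolmogorov argument: Hölder order $\delta<\frac{2}{\beta}-\frac1p$, then $p\to\infty$, combined with \eqref{eq:Xholder} and the triangle inequality for $u=X+Y$ using $\frac{2}{\beta}>\frac12$. You have just made explicit the details the paper leaves implicit: the countable intersection over $p$, and the indistinguishability of continuous modifications.
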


\begin{corollary}
For each fixed \(x\in \mathbb{T}\), the function \(t\mapsto Y(t,x)\) is Hölder continuous of order \(\delta\) for every \(\delta<\frac{1}{\beta}\). Hence, the solution \(u\) possesses the same temporal regularity as \(X\) (order \(<1/4\)), as \(Y\) is more regular.
\end{corollary}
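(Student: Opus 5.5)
The claim follows from Proposition~\ref{prop:Ytemporal} via the Kolmogorov--Chentsov continuity criterion applied to the one-parameter process \(t\mapsto Y(t,x)\), for fixed \(x\in\mathbb{T}\).

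\emph{Moment bound.} By Proposition~\ref{prop:Ytemporal}, for every \(p\ge2\) and every \(\beta\in(\tfrac43,2)\) we have
\[
\mathbf{E}|Y(t+h,x)-Y(t,x)|^p\le C h^{p/\beta}.
\]
This holds for \(h\) small, and for \(h\) bounded away from zero it follows trivially from \eqref{eq:moments}, since \(Y=u-X\) has uniformly bounded moments. Writing \(p/\beta=1+(p/\beta-1)\), Kolmogorov's theorem yields a continuous modification that is Hölder of every order \(\gamma<(p/\beta-1)/p=\tfrac1\beta-\tfrac1p\). Because \(p\) is arbitrary, we let \(p\to\infty\) and obtain every order \(\delta<1/\beta\).

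\emph{Range of exponents.} Since \(\beta\) may be taken arbitrarily close to \(4/3\), the order can approach \(3/4\). In particular \(\delta\) can be chosen strictly larger than \(1/4\). The same bound also holds with \(h<0\), by exchanging the roles of \(t\) and \(t+h\).

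\emph{Regularity of \(u\).} For the second assertion, write \(u(\cdot,x)=X(\cdot,x)+Y(\cdot,x)\). The temporal regularity of \(X\) is of order \(<1/4\) (the standard fact quoted around \eqref{eq:Xholder}). Adding a process of Hölder order \(\delta>1/4\) cannot improve or degrade Hölder order below \(1/4\). Hence \(u\) is Hölder of every order \(<1/4\), exactly as \(X\) is.

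To justify that \(u\) is not better than order \(1/4\), I would invoke the quartic variation \eqref{eq:quartvar}. Here \(\sum_i (X(t_{i+1},x)-X(t_i,x))^4\) converges to a nonzero limit. By contrast, for \(Y\) the corresponding sum of fourth powers of increments is \(O(N\cdot N^{-4\delta})\to0\) when \(\delta>1/4\). By Minkowski's inequality in \(\ell^4\), the quartic variation of \(u\) therefore has the same nonzero limit. This rules out any Hölder exponent strictly larger than \(1/4\) for \(u\).

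\emph{Main obstacle.} The delicate point is that Proposition~\ref{prop:Ytemporal} is stated only for sufficiently small \(h\) and with constants depending on \(\beta\). I need uniformity of \(C\) in \(t\in[0,T]\) to apply Kolmogorov. I would check that the bounds in Lemma~\ref{lem:temporal} and \eqref{eq:moments} are uniform in \(t\) and \(x\), which they are by construction.
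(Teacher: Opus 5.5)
Your proposal is correct and follows the paper's route: the paper only says that Proposition~\ref{prop:Ytemporal} ``immediately yields'' the H\"older property, and that is precisely your Kolmogorov--Chentsov argument with \(p\to\infty\) giving every order \(\delta<1/\beta\). You rightly use the temporal exponent \(p/4\) for \(X\), which is what \eqref{eq:Xholder} should read; your quartic-variation argument that \(u\) cannot be better than order \(1/4\) goes beyond the paper's one-line justification (``as \(Y\) is more regular'') and is a pathwise counterpart of what the paper later proves in \(L^1\) in Proposition~\ref{prop:T_Nx_limit}.
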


\section{Parameter Estimation}

In this section, we construct and analyze two estimators for the parameter 
\(\varepsilon > 0\) based on discrete observations of the solution. The 
general strategy is to exploit the decomposition \(u = X + Y\) established 
in Section~2, together with the regularity results for the nonlinear drift 
component \(Y\) proved in Section~3. The key observation is that, since \(Y\) 
is substantially smoother than \(X\), the asymptotic behavior of the power 
variations of the full solution \(u\) is entirely determined by the 
corresponding variations of the linear part \(X\), whose exact limits are 
known from the literature. This allows us to construct consistent estimators 
for \(\varepsilon\) by inverting these limiting quantities.

We consider two complementary approaches, each based on a different type of 
variation. The first estimator, studied in Subsection~4.2, is built from the 
spatial quadratic variation of the solution observed at a fixed time over a 
uniform grid of spatial points. The second estimator, studied in 
Subsection~4.3, is based on the temporal quartic variation of the solution 
observed at a fixed spatial point over a uniform time grid. The use of the 
quartic rather than the quadratic variation in time is dictated by the 
Hölder regularity of the solution with respect to its temporal variable, 
which is of order strictly less than one quarter and therefore too low for 
the quadratic variation to have a nontrivial limit.

For each estimator, we establish strong consistency, that is, almost sure 
convergence to the true parameter \(\varepsilon\), as well as explicit 
\(L^p\) error bounds that quantify the rate of convergence in terms of the 
number of observations. The proofs rely on a careful decomposition of the 
variation of \(u\) into contributions from \(X\), from \(Y\), and from their 
cross terms, and on showing that the latter two are negligible compared to 
the dominant linear term.

\subsection{Scaling property of the parametrized stochastic Allen--Cahn equation}

Consider the stochastic Allen--Cahn equation (SACE) with parameter \(\varepsilon > 0\) and vanishing initial condition. The mild solution to
\[
\frac{\partial u_\varepsilon}{\partial t}(t,x) = \varepsilon \frac{\partial^2 u_\varepsilon}{\partial x^2}(t,x) + f(u_\varepsilon(t,x)) + \dot{W}(t,x), \qquad (t,x)\in[0,T]\times\mathbb{T},
\]
with \(f(u) = u - u^3\), can be written as
\begin{equation}
\label{eq:u_eps_mild}
u_\varepsilon(t,x) = \int_0^t \int_{\mathbb{T}} p_{\varepsilon(t-s)}(x-y) f(u_\varepsilon(s,y)) \,dy\,ds + \int_0^t \int_{\mathbb{T}} p_{\varepsilon(t-s)}(x-y) W(ds,dy),
\end{equation}
where \(p_t(x)\) denotes the periodic heat kernel on \(\mathbb{T}\). Our goal is to estimate the parameter \(\varepsilon > 0\) based on discrete observations of the solution. The following scaling result is fundamental: it transfers the parameter \(\varepsilon\) from the Laplacian to the noise intensity, thereby making the quadratic and quartic variations of the solution directly informative about \(\varepsilon\). A similar idea has been employed for the stochastic Burgers equations in \cite{AssaadTudor2021}.

\begin{proposition}[Scaling property]
\label{prop:scaling_allencahn}
Define, for \((t,x)\in[0,\varepsilon T]\times\mathbb{T}\),
\begin{equation}
v_\varepsilon(t,x) = u_\varepsilon\!\left(\frac{t}{\varepsilon}, x\right).
\label{eq:v_eps_def}
\end{equation}
Then \(v_\varepsilon\) solves the SPDE
\begin{equation}
\frac{\partial v_\varepsilon}{\partial t}(t,x) = \frac{\partial^2 v_\varepsilon}{\partial x^2}(t,x) + \varepsilon^{-1} f(v_\varepsilon(t,x)) + \varepsilon^{-1/2}\,\widetilde{W}(t,x),
\label{eq:v_eps_spde}
\end{equation}
with \(v_\varepsilon(0,x) = 0\) for every \(x\in\mathbb{T}\), where \(\widetilde{W}\) is a space--time white noise on \([0,\varepsilon T]\times\mathbb{T}\).
\end{proposition}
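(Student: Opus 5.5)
Since the solution is understood in the mild sense, I will prove the statement at the level of the mild formulation: I will show that \(v_\varepsilon\) satisfies the mild equation associated with \eqref{eq:v_eps_spde}, namely
\[
v_\varepsilon(t,x)=\varepsilon^{-1}\int_0^t\int_{\mathbb{T}} p_{t-r}(x-y)f(v_\varepsilon(r,y))\,dy\,dr+\varepsilon^{-1/2}\int_0^t\int_{\mathbb{T}} p_{t-r}(x-y)\widetilde W(dr,dy),
\]
for a suitably defined space--time white noise \(\widetilde W\) on \([0,\varepsilon T]\times\mathbb{T}\). The vanishing initial condition is immediate from \(v_\varepsilon(0,x)=u_\varepsilon(0,x)=0\).

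First I will define \(\widetilde W\). For \(A\in\mathcal{B}_b([0,\varepsilon T]\times\mathbb{T})\), set \(\widetilde W(A)=\varepsilon^{1/2}W(S_\varepsilon^{-1}A)\), where \(S_\varepsilon(s,y)=(\varepsilon s,y)\). This is a centered Gaussian family with covariance \(\varepsilon\,\lambda(S_\varepsilon^{-1}(A\cap B))=\lambda(A\cap B)\), because \(S_\varepsilon^{-1}\) scales Lebesgue measure by \(\varepsilon^{-1}\). Hence \(\widetilde W\) is a space--time white noise. I will then check on indicator functions, and extend by linearity and the Wiener isometry, that for \(\phi\in L^2\) one has \(\int\phi(s,y)W(ds,dy)=\varepsilon^{-1/2}\int\phi(r/\varepsilon,y)\widetilde W(dr,dy)\).

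Next I will evaluate \eqref{eq:u_eps_mild} at time \(t/\varepsilon\). For the drift term, the substitution \(s=r/\varepsilon\) gives \(ds=\varepsilon^{-1}dr\) and \(p_{\varepsilon(t/\varepsilon-s)}=p_{t-r}\). Since \(f(u_\varepsilon(r/\varepsilon,y))=f(v_\varepsilon(r,y))\), this yields exactly \(\varepsilon^{-1}\int_0^t\int p_{t-r}(x-y)f(v_\varepsilon(r,y))\,dy\,dr\). For the stochastic term, I will apply the identity above with \(\phi(s,y)=p_{\varepsilon(t/\varepsilon-s)}(x-y)\mathbf 1_{[0,t/\varepsilon]}(s)\). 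This gives \(\phi(r/\varepsilon,y)=p_{t-r}(x-y)\mathbf 1_{[0,t]}(r)\), and therefore the term \(\varepsilon^{-1/2}\int_0^t\int p_{t-r}(x-y)\widetilde W(dr,dy)\).

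The main point requiring care is the stochastic-integral change of variables, and in particular that a single noise \(\widetilde W\), independent of \((t,x)\), works simultaneously for all \((t,x)\). The construction via \(S_\varepsilon\) handles this, since it does not depend on \((t,x)\). I will also remark that if the drift integral is regarded pathwise, the identity holds almost surely for each \((t,x)\). Then \(v_\varepsilon\) is a mild solution of \eqref{eq:v_eps_spde}, and the known uniqueness of mild solutions for this cubic-drift equation identifies it as the solution.
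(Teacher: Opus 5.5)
Your proposal is correct and follows essentially the same route as the paper. You substitute $s=r/\varepsilon$ in both the drift and the stochastic term of the mild formulation, and you absorb the time rescaling of the noise into a new standard white noise $\widetilde W$; your explicit definition $\widetilde W(A)=\varepsilon^{1/2}W(S_\varepsilon^{-1}A)$, together with the Wiener-integral change-of-variables identity, is a rigorous version of the paper's informal step $\widetilde W(s,y)=\varepsilon^{-1/2}W(s/\varepsilon,y)$ (read as noise densities, the two coincide) and correctly accounts for the factor $\varepsilon^{-1/2}$.
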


\begin{proof}
Starting from the mild formulation \eqref{eq:u_eps_mild} and performing the change of variables \(s' = \varepsilon s\), we obtain
\begin{align*}
v_\varepsilon(t,x)
&= \int_0^{t/\varepsilon} \int_{\mathbb{T}} p_{\varepsilon(t/\varepsilon - s)}(x-y) f(u_\varepsilon(s,y)) \,dy\,ds \\
&\quad + \int_0^{t/\varepsilon} \int_{\mathbb{T}} p_{\varepsilon(t/\varepsilon - s)}(x-y) W(ds,dy) \\
&= \varepsilon^{-1} \int_0^t \int_{\mathbb{T}} p_{t-s}(x-y) f\!\left(u_\varepsilon\!\left(\frac{s}{\varepsilon}, y\right)\right) \,dy\,ds \\
&\quad + \varepsilon^{-1/2} \int_0^t \int_{\mathbb{T}} p_{t-s}(x-y) \,W\!\left(d\!\left(\frac{s}{\varepsilon}\right), dy\right) \\
&= \varepsilon^{-1} \int_0^t \int_{\mathbb{T}} p_{t-s}(x-y) f(v_\varepsilon(s,y)) \,dy\,ds \\
&\quad + \varepsilon^{-1/2} \int_0^t \int_{\mathbb{T}} p_{t-s}(x-y) \,\widetilde{W}(ds,dy).
\end{align*}
In the last step we used that, by the scaling properties of the white noise (see, e.g. \cite[Proposition 2.2.5]{Berglund2019}), the field \(\widetilde{W}(s,y) := \varepsilon^{-1/2} W(s/\varepsilon, y)\) is again a space--time white noise on the torus. The mild equation above is precisely the integral form of \eqref{eq:v_eps_spde}.
\end{proof}

\begin{remark}[Physical interpretation of \(\varepsilon\)]
\label{rem:physical_epsilon}
In the deterministic Allen--Cahn equation, the parameter $\varepsilon$ is closely related to the interfacial tension between the two stable phases ($u \approx \pm 1$). More precisely, the characteristic width of the diffuse interface separating these phases is of order $\sqrt{\varepsilon}$. When thermal fluctuations are incorporated through additive space--time white noise, the stochastic Allen--Cahn equation provides a natural model for phase separation phenomena, such as the segregation of an oil--water emulsion into two pure phases. In this setting, $\varepsilon$ governs the energetic cost associated with the transition layer between the phases. Consequently, estimating $\varepsilon$ from experimental or numerical observations is equivalent to determining the effective interface thickness, a quantity of fundamental importance in materials science, statistical physics, and soft condensed matter. Furthermore, the scaling relation \eqref{eq:v_eps_def} reveals that $\varepsilon$ can be inferred from the local pathwise fluctuations of the solution, as it determines, in a precise way, the scaling of both the noise intensity and the nonlinear reaction term.

\end{remark}

\subsection{Spatial estimator}

Let \(A_1 < A_2\) and assume we observe the solution at discrete spatial points \(x_i = A_1 + i\frac{A_2 - A_1}{N}\), \(i=0,\dots,N\), for a fixed time \(t>0\). Define the spatial quadratic variation of a random field \(Z\) as
\begin{equation}
S_{N,t}(Z) = \sum_{i=0}^{N-1}\bigl( Z(t,x_{i+1}) - Z(t,x_i) \bigr)^2.
\label{eq:S_Nt}
\end{equation}
For a random field \(Z\) and a partition point \(x_i\), we write 
\(\Delta Z_i = Z(t, x_{i+1}) - Z(t, x_i)\) for the spatial increment, 
and similarly \(\Delta Z_i = Z(t_{i+1}, x) - Z(t_i, x)\) for the temporal 
increment, the meaning being clear from context.
\begin{proposition}\label{prop:S_Nt_limit}
Let \(u_\varepsilon\) be given by \eqref{eq:u_eps_mild}. Then, for every \(t>0\),
\begin{equation}
S_{N,t}(u_\varepsilon) \xrightarrow[N\to\infty]{L^1(\Omega)} \frac{1}{2}(A_2 - A_1)\varepsilon^{-1}.
\end{equation}
\end{proposition}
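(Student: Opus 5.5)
Using Proposition~\ref{prop:scaling_allencahn}, we pass from \(u_\varepsilon\) to \(v_\varepsilon\). Since \(u_\varepsilon(t,\cdot)=v_\varepsilon(\varepsilon t,\cdot)\), we have \(S_{N,t}(u_\varepsilon)=S_{N,\varepsilon t}(v_\varepsilon)\). We then decompose \(v_\varepsilon = \varepsilon^{-1/2}\widetilde X + \widetilde Y\), where \(\widetilde X(t,x)=\int_0^t\int_{\mathbb{T}}p_{t-s}(x-y)\widetilde W(ds,dy)\) is the solution of the linear stochastic heat equation with \(\varepsilon=1\), and
\[
\widetilde Y(t,x)=\varepsilon^{-1}\int_0^t\int_{\mathbb{T}}p_{t-s}(x-y)f(v_\varepsilon(s,y))\,dy\,ds .
\]
Writing \(\tau=\varepsilon t\), we expand
\[
S_{N,\tau}(v_\varepsilon)=\varepsilon^{-1}S_{N,\tau}(\widetilde X)+S_{N,\tau}(\widetilde Y)+2\varepsilon^{-1/2}\sum_{i=0}^{N-1}\Delta\widetilde X_i\,\Delta\widetilde Y_i .
\]
By \eqref{eq:quadvar}, applied at the fixed time \(\tau>0\), the first term converges in \(L^2(\Omega)\), and hence in \(L^1(\Omega)\), to \(\frac12(A_2-A_1)\varepsilon^{-1}\). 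It therefore remains to show that the last two terms vanish in \(L^1(\Omega)\).

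For the \(\widetilde Y\) term we need Proposition~\ref{prop:Yspatial} for \(\widetilde Y\). Its proof only uses the mild structure together with the moment bound \eqref{eq:moments}. The moment bound holds for \(v_\varepsilon\) because \(v_\varepsilon(s,y)=u_\varepsilon(s/\varepsilon,y)\), and \(u_\varepsilon\) has uniformly bounded moments by the same result of \cite{Cerrai2001} (the factor \(\varepsilon\) in front of the Laplacian is harmless). The constant \(\varepsilon^{-1}\) only changes \(C\). With \(h=(A_2-A_1)/N\), the proposition gives \(\mathbf E|\Delta\widetilde Y_i|^2\le C h^{4/\beta}\) for any \(\beta\in(2,3)\), so
\[
\mathbf E\, S_{N,\tau}(\widetilde Y)\le C N h^{4/\beta}=C N^{1-4/\beta}\to0,
\]
since \(4/\beta>1\).

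For the cross term we apply Cauchy--Schwarz, first in \(\Omega\) and then in the sum:
\[
\mathbf E\Bigl|\sum_i\Delta\widetilde X_i\Delta\widetilde Y_i\Bigr|\le \bigl(\mathbf E S_{N,\tau}(\widetilde X)\bigr)^{1/2}\bigl(\mathbf E S_{N,\tau}(\widetilde Y)\bigr)^{1/2}.
\]
The first factor is bounded, since by \eqref{eq:Xholder} \(\mathbf E S_{N,\tau}(\widetilde X)\le C N h=C\). The second factor tends to zero by the previous step. Combining the three terms gives the claimed convergence in \(L^1(\Omega)\).

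The main obstacle is justifying the input of Proposition~\ref{prop:Yspatial} in the rescaled setting. Two points need care. First, the proof of Lemma~\ref{lem:spatial} must really give exponent \(2/\beta>1/2\) for \(\beta\in(2,3)\), i.e.\ \(h^{2/\beta}\) with \(2/\beta\in(2/3,1)\); this exceeds \(1/2\), which is exactly what makes \(N h^{4/\beta}\to0\). Second, the moment bound must be uniform over the time interval \([0,\varepsilon T]\). If only Hölder exponent \(\delta>1/2\) were available, the same argument would still work, so I would verify that Proposition~\ref{prop:Yspatial} holds with constants depending on \(\varepsilon\) and \(T\) only. Finally, note that the fixed-time quadratic variation of \(X\) at time \(\tau\) has the stated limit independently of \(\tau\), so the change of time from \(t\) to \(\varepsilon t\) causes no difficulty.
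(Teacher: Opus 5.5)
Your proposal is correct and follows essentially the paper's proof. Both arguments rescale to $v_\varepsilon$ via $S_{N,t}(u_\varepsilon)=S_{N,\varepsilon t}(v_\varepsilon)$, split off the linear part and use \eqref{eq:quadvar} for it, and use Proposition~\ref{prop:Yspatial} to show the pure drift term and the cross term vanish in $L^1(\Omega)$. The differences are cosmetic. You apply Cauchy--Schwarz globally, $\bigl(\mathbf E S_{N,\tau}(\widetilde X)\bigr)^{1/2}\bigl(\mathbf E S_{N,\tau}(\widetilde Y)\bigr)^{1/2}$, where the paper applies it termwise; both give the same rate $N^{1/2-2/\beta}$. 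You also track the rescaled noise $\widetilde W$ and the evaluation time $\varepsilon t$ explicitly, which the paper glosses over.
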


\begin{proof}
By the definition \eqref{eq:v_eps_def} of the process \(v_\varepsilon\), we have \(S_{N,\varepsilon t}(v_\varepsilon) = S_{N,t}(u_\varepsilon)\), where $u_{\varepsilon}$  and $v_{\varepsilon}$ are defined by (\ref{eq:u_eps_mild}) and (\ref{eq:v_eps_def}), respectively.    It thus suffices to study the limit of \(S_{N,t}(v_\varepsilon)\) with fixed \(t>0\). Decompose \(v_\varepsilon\) as
\begin{equation}
v_\varepsilon(t,x) = \varepsilon^{-1/2} X(t,x) + \varepsilon^{-1} Y_\varepsilon(t,x),
\label{eq:v_eps_decomp}
\end{equation}
where \(X\) is the solution to the stochastic heat equation \eqref{eq:mild} with \(\varepsilon=1\) and
\[
Y_\varepsilon(t,x) = \int_0^t \int_{\mathbb{T}} p_{t-s}(x-y) f(v_\varepsilon(s,y)) \,dy\,ds.
\]
Note that for \(\varepsilon=1\), \(Y_1\) coincides with the process \(Y\) analyzed in Section~3, and the law of \(X\) is identical to that of the stochastic heat equation solution. Substituting \eqref{eq:v_eps_decomp} into \eqref{eq:S_Nt} yields
\begin{equation}
\begin{aligned}
S_{N,t}(v_\varepsilon) &= \varepsilon^{-1} S_{N,t}(X) + \varepsilon^{-2} S_{N,t}(Y_\varepsilon) \\
&\quad + 2\varepsilon^{-3/2} \sum_{i=0}^{N-1} \bigl(X(t,x_{i+1})-X(t,x_i)\bigr)\bigl(Y_\varepsilon(t,x_{i+1})-Y_\varepsilon(t,x_i)\bigr).
\end{aligned}
\label{eq:S_Nt_decomp}
\end{equation}
By \eqref{eq:quadvar} we have \(S_{N,t}(X) \to \frac{1}{2}(A_2 - A_1)\) in \(L^2(\Omega)\). Next, Proposition~\ref{prop:Yspatial} gives
\[
\mathbf{E}\bigl[ S_{N,t}(Y_\varepsilon) \bigr] = \sum_{i=0}^{N-1} \mathbf{E}\bigl|Y_\varepsilon(t,x_{i+1}) - Y_\varepsilon(t,x_i)\bigr|^2 \le C  N^{1 - 4/\beta} \xrightarrow[N\to\infty]{} 0,
\]
since \(\beta < 4\). Hence \(S_{N,t}(Y_\varepsilon) \to 0\) in \(L^1(\Omega)\). For the cross term, the Cauchy--Schwarz inequality together with \eqref{eq:Xholder} and Proposition~\ref{prop:Yspatial} gives
\begin{align*}
\mathbf{E}\Bigl| \sum_{i=0}^{N-1} (X(t,x_{i+1})-X(t,x_i))(Y_\varepsilon(t,x_{i+1})-Y_\varepsilon(t,x_i)) \Bigr|
&\le \sum_{i=0}^{N-1} \bigl(\mathbf{E}|\Delta X_i|^2\bigr)^{1/2} \bigl(\mathbf{E}|\Delta Y_{\varepsilon,i}|^2\bigr)^{1/2} \\
&\le C N \cdot N^{-1/2} \cdot N^{-2/\beta} \\&= C N^{1/2 - 2/\beta} \to 0,
\end{align*}
provided \(\beta < 4\). Choosing \(\beta \in (2,3)\) satisfies this requirement. The limit in \(L^1(\Omega)\) follows.
\end{proof}

Define the spatial estimator
\begin{equation}
\widehat{\varepsilon}_{N,t} = \frac{A_2 - A_1}{2 S_{N,t}(u_\varepsilon)}.
\label{eq:theta_hat_spatial}
\end{equation}

\begin{proposition}[Consistency in probability]
\label{prop:consistency_probability}
For every fixed \(t>0\), the estimator \(\widehat{\varepsilon}_{N,t}\) converges in probability to \(\varepsilon\) as \(N\to\infty\).
\end{proposition}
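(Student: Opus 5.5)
The plan is to deduce consistency directly from Proposition~\ref{prop:S_Nt_limit} through the continuous mapping theorem. Proposition~\ref{prop:S_Nt_limit} gives
\[
S_{N,t}(u_\varepsilon)\longrightarrow \tfrac{1}{2}(A_2-A_1)\varepsilon^{-1} \quad \text{in } L^1(\Omega).
\]
Convergence in $L^1(\Omega)$ implies convergence in probability, by Markov's inequality: for every $\eta>0$,
\[
\mathbf{P}\bigl(|S_{N,t}(u_\varepsilon)-\tfrac12(A_2-A_1)\varepsilon^{-1}|>\eta\bigr)\le \eta^{-1}\mathbf{E}|S_{N,t}(u_\varepsilon)-\tfrac12(A_2-A_1)\varepsilon^{-1}|\to 0 .
\]

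The second step is to note that the limit $\ell:=\frac12(A_2-A_1)\varepsilon^{-1}$ is a strictly positive deterministic constant, because $A_2>A_1$ and $\varepsilon>0$. The map $g(s)=\frac{A_2-A_1}{2s}$ is continuous on $(0,\infty)$, in particular at $s=\ell$, and $g(\ell)=\varepsilon$. Since $\widehat{\varepsilon}_{N,t}=g(S_{N,t}(u_\varepsilon))$, the continuous mapping theorem for convergence in probability to a constant gives $\widehat{\varepsilon}_{N,t}\to\varepsilon$ in probability.

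The only technical point, and the step I expect to be the main obstacle, is that $g$ is undefined at $S_{N,t}=0$. I would handle it explicitly rather than quote the theorem blindly. On the event $E_N=\{|S_{N,t}(u_\varepsilon)-\ell|\le \ell/2\}$ we have $S_{N,t}(u_\varepsilon)\ge \ell/2>0$, and
\[
|\widehat{\varepsilon}_{N,t}-\varepsilon| = \frac{A_2-A_1}{2}\,\frac{|\ell-S_{N,t}(u_\varepsilon)|}{S_{N,t}(u_\varepsilon)\,\ell}\le \frac{A_2-A_1}{\ell^2}\,|S_{N,t}(u_\varepsilon)-\ell| .
\]
Hence, for every $\eta>0$,
\[
\mathbf{P}\bigl(|\widehat{\varepsilon}_{N,t}-\varepsilon|>\eta\bigr)\le \mathbf{P}(E_N^c)+\mathbf{P}\Bigl(|S_{N,t}(u_\varepsilon)-\ell|>\tfrac{\eta\ell^2}{A_2-A_1}\Bigr).
\]
Both terms tend to zero by the first step, which proves the proposition.

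If the estimator is to be defined everywhere, one may set $\widehat{\varepsilon}_{N,t}$ arbitrarily on the null set $\{S_{N,t}=0\}$. This does not affect the argument, since that set is contained in $E_N^c$. One could also check that it has probability zero, because the Gaussian part makes $\Delta u$ nondegenerate, but this is not needed.
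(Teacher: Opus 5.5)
Your proof is correct and follows essentially the same route as the paper's. Both deduce convergence in probability from the $L^1$ limit in Proposition~\ref{prop:S_Nt_limit}, then work on $E_N=\{|S_{N,t}(u_\varepsilon)-\ell|\le \ell/2\}$: there the denominator is at least $\ell/2$, so $|\widehat{\varepsilon}_{N,t}-\varepsilon|\le \frac{A_2-A_1}{\ell^2}\,|S_{N,t}(u_\varepsilon)-\ell|$, and $\mathbf{P}(E_N^c)\to 0$. Your version only makes the Markov step and this constant explicit. Calling $\{S_{N,t}=0\}$ a ``null set'' is unnecessary, since, as you note, it is contained in $E_N^c$.
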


\begin{proof}
From Proposition~\ref{prop:S_Nt_limit} we have \(S_{N,t}(u_\varepsilon) \xrightarrow{L^1} S_\infty := \frac{1}{2}(A_2-A_1)\varepsilon^{-1} > 0\). Convergence in \(L^1\) implies convergence in probability. Taking \(\eta = S_\infty/2\), the event \(E_N = \{ |S_{N,t}(u_\varepsilon) - S_\infty| \le S_\infty/2 \}\) satisfies \(\mathbb{P}(E_N) \to 1\). On \(E_N\), the denominator is at least \(S_\infty/2\), so
\[
|\widehat{\varepsilon}_{N,t} - \varepsilon| = \frac{| \frac{1}{2}(A_2-A_1) - \varepsilon S_{N,t}(u_\varepsilon) |}{S_{N,t}(u_\varepsilon)} \le \frac{2}{S_\infty} \bigl| \tfrac{1}{2}(A_2-A_1) - \varepsilon S_{N,t}(u_\varepsilon) \bigr|.
\]
Since \(\varepsilon S_{N,t}(u_\varepsilon) \to \frac{1}{2}(A_2-A_1)\) in probability, for any \(\delta>0\) we can choose \(N\) large enough so that the right-hand side exceeds \(\delta\) with arbitrarily small probability. Together with \(\mathbb{P}(E_N^c) \to 0\), this yields \(\mathbb{P}(|\widehat{\varepsilon}_{N,t} - \varepsilon| > \delta) \to 0\).
\end{proof}


\begin{proposition}[Almost sure convergence of the quadratic variation]
\label{prop:almost_sure_spatial}
For every \(t>0\),
\[
S_{N,t}(u_\varepsilon) \xrightarrow[N\to\infty]{\text{a.s.}} \frac{1}{2}(A_2 - A_1)\varepsilon^{-1}.
\]
\end{proposition}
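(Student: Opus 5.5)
We pass to the rescaled process and use the decomposition \eqref{eq:v_eps_decomp}, exactly as in the proof of Proposition~\ref{prop:S_Nt_limit}. Since $S_{N,t}(u_\varepsilon)=S_{N,\varepsilon t}(v_\varepsilon)$, it suffices to show, for fixed $t>0$, that each of the three terms in \eqref{eq:S_Nt_decomp} converges almost surely. Convergence along the full sequence $N\to\infty$ (not only a subsequence) will come from Borel--Cantelli. For this we need summable tail bounds $\mathbb{P}(|R_N|>\eta)\le C N^{-\gamma}$ with $\gamma>1$, and we get them from $L^p(\Omega)$ bounds with $p$ large enough, using that all moments are controlled by Proposition~\ref{prop:Yspatial} and \eqref{eq:Xholder}.

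The main obstacle is the Gaussian term $S_{N,t}(X)-\frac12(A_2-A_1)$, because \eqref{eq:quadvar} only gives $L^2$ convergence, with no rate. I would first establish a rate via the covariance structure of the increments $\Delta X_i$: they form a centered Gaussian vector in the first Wiener chaos. Hence $S_{N,t}(X)-\mathbf{E}S_{N,t}(X)$ lies in the second chaos, and
\[
\operatorname{Var} S_{N,t}(X)=2\sum_{i,j}\bigl(\mathbf{E}[\Delta X_i\Delta X_j]\bigr)^2 .
\]
Using the heat-kernel representation and Remark~\ref{rem1}, the increment covariances behave like those of a Brownian motion plus a smooth correction: $\mathbf{E}[\Delta X_i\Delta X_j]=\frac{1}{2N}(A_2-A_1)\delta_{ij}+O(N^{-2})$ after summing. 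This gives $\operatorname{Var} S_{N,t}(X)\le C/N$, and also $|\mathbf{E}S_{N,t}(X)-\frac12(A_2-A_1)|\to0$, which is deterministic. Hypercontractivity on the second chaos then yields
\[
\mathbf{E}\bigl|S_{N,t}(X)-\mathbf{E}S_{N,t}(X)\bigr|^{p}\le C_p N^{-p/2}.
\]
Taking $p>2$, Markov's inequality gives a summable tail, and Borel--Cantelli gives a.s. convergence. If the sharp covariance estimate proves delicate on $\mathbb{T}$, I would instead cite the corresponding results in \cite{PospisilTribe2007} and \cite{CimpNachTudor} together with Remark~\ref{rem1}.

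For the remaining terms, Minkowski's inequality and Proposition~\ref{prop:Yspatial} give
\[
\|S_{N,t}(Y_\varepsilon)\|_p\le \sum_i\|\Delta Y_{\varepsilon,i}\|_{2p}^2\le CN^{1-4/\beta}.
\]
For the cross term, Hölder's inequality gives the bound $CN^{1/2-2/\beta}$ in $L^p$. With $\beta\in(2,3)$ fixed, both exponents are negative, say equal to $-\kappa$ with $\kappa>0$. Choosing $p$ with $p\kappa>1$ makes $\sum_N \mathbb{P}(|\cdot|>\eta)\le\sum_N C\eta^{-p}N^{-p\kappa}<\infty$, so both terms tend to $0$ almost surely. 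One point to check is that Proposition~\ref{prop:Yspatial} applies to $Y_\varepsilon$ with $v_\varepsilon$ in place of $u$. This holds because its proof only used the moment bound \eqref{eq:moments}, which holds for $v_\varepsilon$ by the scaling Proposition~\ref{prop:scaling_allencahn}.

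Combining the three terms with the prefactors $\varepsilon^{-1},\varepsilon^{-2},\varepsilon^{-3/2}$ gives
\[
S_{N,t}(u_\varepsilon)\to\tfrac12(A_2-A_1)\varepsilon^{-1}\quad\text{a.s.}
\]
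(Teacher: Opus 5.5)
Your proposal is correct and follows the paper's proof. Both use the same three-term decomposition and control the cross term and the pure-$Y$ term through $L^p$ bounds from Proposition~\ref{prop:Yspatial} and \eqref{eq:Xholder} followed by Borel--Cantelli, and you make explicit the choice $p\kappa>1$ that the paper leaves implicit. For the Gaussian term you spell out the argument the paper only cites from \cite{AssaadTudor2021} and \cite{PospisilTribe2007}: a second-chaos variance bound of order $O(1/N)$, hypercontractivity, and Borel--Cantelli.
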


\begin{proof}
Assume \(\varepsilon = 1\) for simplicity. Decompose as in \eqref{eq:S_Nt_decomp}:
\[
S_{N,t}(u_1) - \frac{1}{2}(A_2-A_1) = \bigl(S_{N,t}(X) - \tfrac{1}{2}(A_2-A_1)\bigr) + 2\sum_{i=0}^{N-1} \Delta X_i \Delta Y_i + S_{N,t}(Y).
\]
For the linear part, the almost sure convergence \(S_{N,t}(X) \to \frac{1}{2}(A_2-A_1)\) follows from the same arguments as in  \cite{AssaadTudor2021} and the fact that the covariance structure on the torus yields identical small‑time asymptotics; see also \cite[Remark~6.2, (9) and (11)]{PospisilTribe2007}. For the cross term, using \eqref{eq:Xholder} and Proposition~\ref{prop:Yspatial},
\[
\mathbf{E}\Bigl| \sum_{i=0}^{N-1} \Delta X_i \Delta Y_i \Bigr|^p \le C N^{p(1/2 - 2/\beta)}.
\]
Choosing \(\beta \in (2,3)\) ensures the exponent is negative. The Borel--Cantelli lemma then yields almost sure convergence to zero. Similarly,
\[
\mathbf{E}|S_{N,t}(Y)|^p \le C N^{p(1 - 4/\beta)} \to 0,
\]
and another application of Borel--Cantelli gives \(S_{N,t}(Y) \to 0\) a.s. Combining these limits proves the claim.
\end{proof}

\begin{theorem}[Strong consistency and \(L^p\) error bound for the spatial estimator]
\label{thm:spatial_estimator}
The estimator \(\widehat{\varepsilon}_{N,t}\) defined in \eqref{eq:theta_hat_spatial} is strongly consistent, i.e.\ \(\widehat{\varepsilon}_{N,t} \to \varepsilon\) almost surely. Moreover, for every \(p \ge 2\) there exists \(C>0\) such that for all large \(N\),
\[
\mathbf{E}\bigl| \widehat{\varepsilon}_{N,t} - \varepsilon \bigr|^p \le C N^{p(1/2 - 2/\beta)},
\]
for any \(\beta \in (2,3)\).
\end{theorem}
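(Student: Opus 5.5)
Strong consistency is immediate from Proposition~\ref{prop:almost_sure_spatial}. Indeed, $S_{N,t}(u_\varepsilon)\to S_\infty=\frac12(A_2-A_1)\varepsilon^{-1}>0$ almost surely. The map $s\mapsto \frac{A_2-A_1}{2s}$ is continuous at $S_\infty$, so $\widehat{\varepsilon}_{N,t}\to \frac{A_2-A_1}{2S_\infty}=\varepsilon$ almost surely. The real work is the $L^p$ bound.

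For the $L^p$ bound we write
\[
\widehat{\varepsilon}_{N,t}-\varepsilon=\frac{\varepsilon\bigl(S_\infty-S_{N,t}(u_\varepsilon)\bigr)}{S_{N,t}(u_\varepsilon)}.
\]
The first step is an $L^{2p}$ bound on the numerator. We pass to $v_\varepsilon$ via Proposition~\ref{prop:scaling_allencahn} and use the decomposition \eqref{eq:S_Nt_decomp}, so that $S_\infty-S_{N,t}(u_\varepsilon)$ splits into three terms.
\begin{itemize}
\item The Gaussian term $\varepsilon^{-1}\bigl(S_{N,t}(X)-\frac12(A_2-A_1)\bigr)$ lies in the second Wiener chaos. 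Its $L^2$ norm is $O(N^{-1/2})$ by the covariance computations behind \eqref{eq:quadvar}, or a bias-plus-variance computation using Remark~\ref{rem1}. Hypercontractivity then gives an $L^q$ norm of order $N^{-1/2}$ for every $q$.
\item The cross term has $L^q$ norm at most $C N^{1/2-2/\beta}$. This follows from H\"older's inequality in $\Omega$, \eqref{eq:Xholder} and Proposition~\ref{prop:Yspatial}, exactly as in the proof of Proposition~\ref{prop:almost_sure_spatial}.
\item The term $S_{N,t}(Y_\varepsilon)$ has $L^q$ norm at most $CN^{1-4/\beta}$.
\end{itemize}
Since $\beta\in(2,3)$, we have $1-4/\beta\le 1/2-2/\beta$ and $-1/2\le 1/2-2/\beta$ (the latter because $2/\beta<1$). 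Hence $\|S_{N,t}(u_\varepsilon)-S_\infty\|_q\le C N^{1/2-2/\beta}$ for every $q\ge 2$.

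Next comes the denominator, which I expect to be the main obstacle: we need negative moments of $S_{N,t}(u_\varepsilon)$. We split over the event $E_N=\{S_{N,t}(u_\varepsilon)\ge S_\infty/2\}$.
\begin{itemize}
\item On $E_N$, $|\widehat{\varepsilon}_{N,t}-\varepsilon|\le \frac{2\varepsilon}{S_\infty}|S_\infty-S_{N,t}(u_\varepsilon)|$, so the first step gives the contribution $CN^{p(1/2-2/\beta)}$.
\item On $E_N^c$, Cauchy--Schwarz gives
\[
\mathbf{E}\bigl[|\widehat{\varepsilon}_{N,t}-\varepsilon|^p\mathbf{1}_{E_N^c}\bigr]\le \|\widehat{\varepsilon}_{N,t}-\varepsilon\|_{2p}^p\,\mathbb{P}(E_N^c)^{1/2}.
\]
By Markov's inequality and the first step, $\mathbb{P}(E_N^c)\le C_q N^{-q(2/\beta-1/2)}$ for arbitrarily large $q$. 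So it suffices to show $\sup_N\mathbf{E}[S_{N,t}(u_\varepsilon)^{-2p}]<\infty$ for $N$ large.
\end{itemize}

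To control these negative moments, I would first try to reduce to the Gaussian part. Take a sub-collection of $M\asymp N$ well-separated increments, or simply use the lower bound $S_{N,t}(u)\ge\frac12 S_{N,t}(X)-S_{N,t}(Y)$, valid since $(a+b)^2\ge a^2/2-b^2$ (up to the $\varepsilon$ factors). The quadratic form $S_{N,t}(X)$ is a sum of squares of a Gaussian vector whose covariance matrix has eigenvalues of order $N^{-1}$ on a subspace of dimension $\asymp N$. By the standard small-ball estimate for Gaussian quadratic forms, $\mathbb{P}(S_{N,t}(X)<\delta)\le (C\delta)^{cN}$, so all negative moments of $S_{N,t}(X)$ are bounded uniformly in $N$. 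Combining this with the superpolynomial tail bound for $S_{N,t}(Y)$ handles the event where $S_{N,t}(Y)$ is comparable to $S_{N,t}(X)$. If this becomes delicate, the fallback is to state the bound for the truncated estimator $\widehat{\varepsilon}_{N,t}\wedge K$, which makes the $E_N^c$ contribution trivially $K^p\,\mathbb{P}(E_N^c)$.
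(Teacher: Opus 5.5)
The strong consistency argument is right, and so is your $L^q$ bound on $S_{N,t}(u_\varepsilon)-S_\infty$. It is actually more complete than the paper's, since you also treat the Gaussian fluctuation of $S_{N,t}(X)$ and check that the cross term dominates it.

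\textbf{The gap.} It is exactly the step you flag, and it cannot be skipped. Since $\widehat{\varepsilon}_{N,t}$ is a constant multiple of $S_{N,t}(u_\varepsilon)^{-1}$, the claimed $L^p$ bound already presupposes $\mathbf{E}[S_{N,t}(u_\varepsilon)^{-p}]<\infty$. Your sketch does not deliver this. From $S_{N,t}(u)\ge\frac12 S_{N,t}(X)-S_{N,t}(Y)$ you only get
\[
\mathbb{P}\bigl(S_{N,t}(u)<\delta\bigr)\le \mathbb{P}\bigl(S_{N,t}(X)<4\delta\bigr)+\mathbb{P}\bigl(S_{N,t}(Y)>\delta\bigr)\le (C\delta)^{cN}+C_q\,\delta^{-q}N^{q(1-4/\beta)} .
\]
The second term is informative only when $\delta\gg N^{1-4/\beta}$. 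A negative moment, however, is governed by $\delta\to 0$ at fixed $N$, and there this bound is vacuous. The superpolynomial tail of $S_{N,t}(Y)$ is superpolynomial in $N$ at a fixed threshold, not in $\delta$.

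On the event $\{S_{N,t}(Y)\ge \frac14 S_{N,t}(X)\}$ you have no lower bound on $S_{N,t}(u)$ at all. The increments of $X$ and $Y$ can cancel, and $Y$ is a functional of the same noise, so nothing decouples. Controlling $\mathbf{E}\bigl[S_{N,t}(u)^{-2p}\mathbf{1}_{\{S_{N,t}(Y)\ge S_{N,t}(X)/4\}}\bigr]$ through H\"older and the small probability of that event would need a higher negative moment of $S_{N,t}(u)$, which is circular. Passing to well-separated sub-collections, or using a pathwise Lipschitz bound on $Y(t,\cdot)$, has the same defect. The truncated-estimator fallback proves a different statement.

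\textbf{What is needed.} You need a non-degeneracy estimate for the law of the \emph{nonlinear} field. Your own reduction only requires $\mathbf{E}[S_{N,t}(u_\varepsilon)^{-2p}]\le CN^{\gamma}$ for some $\gamma$, because $\mathbb{P}(E_N^c)$ decays faster than any power of $N$. So it is enough to control a fixed number $k$ of increments, with $k$ large depending on $p$. One route is Malliavin calculus. Another is:
\begin{itemize}
\item truncate $f$ at level $R$;
\item apply Girsanov only on a terminal window $[t-\delta_0,t]$ with $\delta_0\asymp R^{-6}$, so the density stays in $L^2$ uniformly in $R$;
\item use Anderson's inequality to discard the part of $u(t,\cdot)$ determined by the noise before $t-\delta_0$, and your Gaussian small-ball bound for the remaining Gaussian part;
\item sum over $R$ using the Gaussian tail of $\sup|u|$, which holds for Allen--Cahn by comparison with $X$.
\end{itemize}

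\textbf{The paper's proof has the same hole.} It does not handle the denominator either. It inserts $|\widehat{\varepsilon}_{N,t}-\varepsilon|\le \frac{2\varepsilon}{S_\infty}\bigl|S_\infty-S_{N,t}(u_\varepsilon)\bigr|$, which holds only for $N\ge N_0(\omega)$, directly into the $L^p$ norm. You have correctly located this hole, but your proposal does not yet fill it.
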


\begin{proof}
We have
\[
\widehat{\varepsilon}_{N,t} - \varepsilon = \frac{\varepsilon\bigl( \frac{A_2-A_1}{2\varepsilon} - S_{N,t}(u_\varepsilon) \bigr)}{S_{N,t}(u_\varepsilon)}.
\]
By Proposition~\ref{prop:almost_sure_spatial}, \(S_{N,t}(u_\varepsilon) \to S_\infty > 0\) almost surely. Hence for almost every \(\omega\), there exists \(N_0(\omega)\) such that \(S_{N,t}(u_\varepsilon) \ge S_\infty/2\) for all \(N \ge N_0(\omega)\). On this event,
\[
|\widehat{\varepsilon}_{N,t} - \varepsilon| \le \frac{2\varepsilon}{S_\infty} \Bigl| \frac{A_2-A_1}{2\varepsilon} - S_{N,t}(u_\varepsilon) \Bigr|,
\]
and the almost sure convergence of the right-hand side to zero follows from Proposition~\ref{prop:almost_sure_spatial}. For the \(L^p\) bound, Minkowski's inequality and the estimates from the proof of Proposition~\ref{prop:almost_sure_spatial} give
\[
\bigl\| \widehat{\varepsilon}_{N,t} - \varepsilon \bigr\|_p \le C \Bigl\| \frac{A_2-A_1}{2\varepsilon} - S_{N,t}(u_\varepsilon) \Bigr\|_p \le C N^{1/2 - 2/\beta},
\]
because the dominant term in the error comes from the cross term 
$2\sum_{i=0}^{N-1}\bigl(X(t,x_{i+1})-X(t,x_i)\bigr)\bigl(Y(t,x_{i+1})-Y(t,x_i)\bigr)$. Raising to the \(p\)-th power yields the claimed bound.
\end{proof}

\subsection{Temporal estimator}

Now fix a spatial point \(x \in \mathbb{T}\) and consider a time partition \(t_i = A_1 + i\frac{A_2-A_1}{N}\), \(i=0,\dots,N\), with \(0 < A_1 < A_2 \le T\). For a random field \(Z\), define its temporal quartic variation
\begin{equation}
T_{N,x}(Z) = \sum_{i=0}^{N-1} \bigl( Z(t_{i+1},x) - Z(t_i,x) \bigr)^4.
\label{eq:T_Nx}
\end{equation}

We now establish the limit of the temporal quartic variation of the full 
solution \(u_\varepsilon\), showing that it is asymptotically dominated by 
the contribution of the linear part \(X\), thanks to the higher temporal 
regularity of \(Y\) established in Proposition~\ref{prop:Ytemporal}.
\begin{proposition}\label{prop:T_Nx_limit}
For every fixed \(x \in \mathbb{T}\),
\begin{equation}
T_{N,x}(u_\varepsilon) \xrightarrow[N\to\infty]{L^1(\Omega)} \frac{3}{\pi} \varepsilon^{-1} (A_2 - A_1).
\label{eq:T_Nx_limit}
\end{equation}
\end{proposition}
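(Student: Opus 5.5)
The plan mirrors the proof of Proposition~\ref{prop:S_Nt_limit}, now with fourth powers in time. First I transfer to the rescaled process $v_\varepsilon$ of Proposition~\ref{prop:scaling_allencahn}. Then I expand the quartic variation of $v_\varepsilon=\varepsilon^{-1/2}X+\varepsilon^{-1}Y_\varepsilon$ binomially. The pure $X$ term is handled by \eqref{eq:quartvar}, and all terms involving $Y_\varepsilon$ are shown to vanish in $L^1(\Omega)$ using Proposition~\ref{prop:Ytemporal} together with \eqref{eq:Xholder} and H\"older's inequality.

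\textbf{Scaling and expansion.} Since $u_\varepsilon(t,x)=v_\varepsilon(\varepsilon t,x)$, the time partition $t_i$ becomes $\varepsilon t_i$, an equidistant partition of $[\varepsilon A_1,\varepsilon A_2]$ of the same size $N$. Hence $T_{N,x}(u_\varepsilon)$ equals the quartic variation of $v_\varepsilon$ along this rescaled grid. Applying \eqref{eq:quartvar} on the interval $[\varepsilon A_1,\varepsilon A_2]$, which has length $\varepsilon(A_2-A_1)$, gives
\[
\varepsilon^{-2}\sum_i (\Delta X_i)^4 \to \varepsilon^{-2}\cdot\tfrac{3}{\pi}\,\varepsilon(A_2-A_1)=\tfrac{3}{\pi}\varepsilon^{-1}(A_2-A_1)
\]
in $L^2(\Omega)$. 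This is the claimed limit.

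Writing $\Delta v_i=\varepsilon^{-1/2}\Delta X_i+\varepsilon^{-1}\Delta Y_{\varepsilon,i}$ and expanding the fourth power leaves the cross terms
\[
\sum_i (\Delta X_i)^{4-k}(\Delta Y_{\varepsilon,i})^k,\qquad k=1,\dots,4,
\]
each multiplied by a constant depending on $\varepsilon$.

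\textbf{Bounding the cross terms.} By H\"older's inequality,
\[
\mathbf{E}\bigl|(\Delta X_i)^{4-k}(\Delta Y_{\varepsilon,i})^k\bigr|\le \|\Delta X_i\|_{8-2k}^{4-k}\,\|\Delta Y_{\varepsilon,i}\|_{2k}^{k}.
\]
For the first factor, I use the sharp temporal increment bound $\mathbf{E}|\Delta X_i|^q\le C(\Delta t)^{q/4}$ for the stochastic heat equation. This is its true order (Gaussianity plus $\mathbf{E}|\Delta X_i|^2\asymp (\Delta t)^{1/2}$), and it is sharper than the one displayed in \eqref{eq:Xholder}; I would state it explicitly, citing \cite{PospisilTribe2007}. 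For the second factor, Proposition~\ref{prop:Ytemporal} gives $\|\Delta Y_{\varepsilon,i}\|_{2k}\le C(\Delta t)^{1/\beta}$ with $\beta\in(4/3,2)$. Here $\Delta t = N^{-1}$ up to constants.

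The worst term is $k=1$, for which the sum is bounded by
\[
C\,N\cdot N^{-3/4}\cdot N^{-1/\beta}=C\,N^{1/4-1/\beta}.
\]
This tends to zero because $\beta<4$, so any $\beta\in(4/3,2)$ works. The terms $k\ge2$ are even smaller; for instance, the pure $Y$ term is bounded by $C\,N^{1-4/\beta}\to0$.

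\textbf{Remaining technicalities and main obstacle.} Proposition~\ref{prop:Ytemporal} was proved for $Y$ with $\varepsilon=1$ and for small $h$. For $Y_\varepsilon$ the same proof applies, since it only uses the heat-kernel Lemma~\ref{lem:temporal} and uniform moments of $v_\varepsilon$. Those moments follow from \eqref{eq:moments} for $u_\varepsilon$ via the time change. Small $h$ is automatic because $h=\varepsilon(A_2-A_1)/N$. The step I expect to be delicate is precisely the use of the sharp $(\Delta t)^{1/4}$ bound for $X$ rather than the cruder form written in \eqref{eq:Xholder}. With the latter, the $k=1$ cross term would not be seen to vanish. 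Everything else is a direct repetition of the argument of Proposition~\ref{prop:S_Nt_limit}.
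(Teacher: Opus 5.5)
Your argument is correct and is essentially the paper's proof. You rescale to $v_\varepsilon$, expand the fourth power, use \eqref{eq:quartvar} for the pure $X$ term, and kill the mixed terms by Cauchy--Schwarz using $\mathbf{E}|\Delta X_i|^q\le C N^{-q/4}$ together with Proposition~\ref{prop:Ytemporal}; the paper obtains the same exponents $N^{1/4-1/\beta}$, $N^{1/2-2/\beta}$, $N^{3/4-3/\beta}$, $N^{1-4/\beta}$, and your explicit decomposition $v_\varepsilon=\varepsilon^{-1/2}X+\varepsilon^{-1}Y_\varepsilon$ tracks the $\varepsilon$-dependence more carefully than the paper's reduction to $\varepsilon=1$.

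One correction to your closing remark: the time part of \eqref{eq:Xholder} as displayed, $|t-s|^{p/2}$, is a \emph{stronger} bound than the true order $(\Delta t)^{q/4}$, and it is in fact false, so using it would make the $k=1$ term vanish faster rather than fail. The real point is that the correct, weaker bound $(\Delta t)^{q/4}$ still suffices; the paper also uses it implicitly through its factors $N^{-3/4}$, $N^{-1/2}$, $N^{-1/4}$.
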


\begin{proof}
By scaling it suffices to prove the result for $\varepsilon=1$ and for the field $v_1(t,x)=u_1(t,x)$. Decompose $v_1 = X + Y$ with $X$ the stochastic heat equation solution and $Y$ the nonlinear remainder. Expanding the quartic variation gives
\begin{align}
T_{N,x}(v_1) &= T_{N,x}(X) \notag \\
&\quad + 4\sum_{i=0}^{N-1} (\Delta X_i)^3 (\Delta Y_i) \label{term:L1_cubeX} \\
&\quad + 6\sum_{i=0}^{N-1} (\Delta X_i)^2 (\Delta Y_i)^2 \label{term:L1_squareXY} \\
&\quad + 4\sum_{i=0}^{N-1} (\Delta X_i) (\Delta Y_i)^3 \label{term:L1_cubeY} \\
&\quad + T_{N,x}(Y), \label{term:L1_pureY}
\end{align}
where $\Delta X_i = X(t_{i+1},x) - X(t_i,x)$ and $\Delta Y_i = Y(t_{i+1},x) - Y(t_i,x)$.

By \eqref{eq:quartvar}, the leading term satisfies
\[
\mathbf{E}\Bigl| T_{N,x}(X) - \frac{3}{\pi}(A_2 - A_1) \Bigr| \xrightarrow[N\to\infty]{} 0.
\]

We now estimate the $L^1(\Omega)$-norm of each remaining term using the moment bounds \eqref{eq:Xholder} for $X$ and Proposition~\ref{prop:Ytemporal} for $Y$. For any $\beta\in(4/3,2)$ we have:

\begin{itemize}
\item \textbf{Term \eqref{term:L1_cubeX}: } We can write
\begin{align*}
&\mathbf{E}\Bigl| \sum_{i=0}^{N-1} (\Delta X_i)^3 (\Delta Y_i) \Bigr|
\le \sum_{i=0}^{N-1} \mathbf{E}\bigl[ |\Delta X_i|^3 |\Delta Y_i| \bigr] \\
&\le \sum_{i=0}^{N-1} \bigl( \mathbf{E}|\Delta X_i|^6 \bigr)^{1/2} \bigl( \mathbf{E}|\Delta Y_i|^2 \bigr)^{1/2} 
\le C N \cdot N^{-3/4} \cdot N^{-1/\beta} \\
&= C N^{\frac{1}{4} - \frac{1}{\beta}} \xrightarrow[N\to\infty]{} 0,
\end{align*}
since $\frac{1}{4} - \frac{1}{\beta} < 0$ for $\beta < 4$ (in particular for $\beta<2$).

\item \textbf{Term \eqref{term:L1_squareXY}: }We bound this term as follows
\begin{align*}
&\mathbf{E}\Bigl| \sum_{i=0}^{N-1} (\Delta X_i)^2 (\Delta Y_i)^2 \Bigr|
\le \sum_{i=0}^{N-1} \mathbf{E}\bigl[ |\Delta X_i|^2 |\Delta Y_i|^2 \bigr] \\
&\le \sum_{i=0}^{N-1} \bigl( \mathbf{E}|\Delta X_i|^4 \bigr)^{1/2} \bigl( \mathbf{E}|\Delta Y_i|^4 \bigr)^{1/2} \\&\le C N \cdot N^{-1/2} \cdot N^{-2/\beta} 
= C N^{\frac{1}{2} - \frac{2}{\beta}} \xrightarrow[N\to\infty]{} 0,
\end{align*}
since $\frac{1}{2} - \frac{2}{\beta} < 0$ for $\beta < 4$.

\item \textbf{Term \eqref{term:L1_cubeY}: }We have
\begin{align*}
&\mathbf{E}\Bigl| \sum_{i=0}^{N-1} (\Delta X_i) (\Delta Y_i)^3 \Bigr|
\le \sum_{i=0}^{N-1} \mathbf{E}\bigl[ |\Delta X_i| |\Delta Y_i|^3 \bigr] \\
&\le \sum_{i=0}^{N-1} \bigl( \mathbf{E}|\Delta X_i|^2 \bigr)^{1/2} \bigl( \mathbf{E}|\Delta Y_i|^6 \bigr)^{1/2} \\
&\le C N \cdot N^{-1/4} \cdot N^{-3/\beta} 
= C N^{\frac{3}{4} - \frac{3}{\beta}} \xrightarrow[N\to\infty]{} 0,
\end{align*}
since $\frac{3}{4} - \frac{3}{\beta} < 0$ for $\beta < 4$.

\item \textbf{Term \eqref{term:L1_pureY}: }
\begin{align*}
&\mathbf{E}\bigl| T_{N,x}(Y) \bigr|
\le \sum_{i=0}^{N-1} \mathbf{E}|\Delta Y_i|^4 \\
&\le C N \cdot N^{-4/\beta} = C N^{1 - \frac{4}{\beta}} \xrightarrow[N\to\infty]{} 0,
\end{align*}
since $1 - \frac{4}{\beta} < 0$ for $\beta < 4$ (in particular for $\beta<2$).
\end{itemize}
All the above terms converge to zero in $L^1(\Omega)$ as $N\to\infty$. Consequently,
\[
T_{N,x}(v_1) \xrightarrow[N\to\infty]{L^1(\Omega)} \frac{3}{\pi}(A_2 - A_1).
\]

Finally, the scaling relation $T_{N,x}(u_\varepsilon) = T_{N,\varepsilon t}(v_\varepsilon)$ and the fact that $(\varepsilon t_i)$ partitions $[\varepsilon A_1,\varepsilon A_2]$ yield \eqref{eq:T_Nx_limit}.
\end{proof}


Define the temporal estimator
\begin{equation}
\widehat{\varepsilon}_{N,x} = \frac{3}{\pi} \frac{A_2 - A_1}{T_{N,x}(u_\varepsilon)}.
\label{eq:theta_hat_temporal}
\end{equation}

\begin{proposition}[Consistency in probability]
\label{prop:consistency_temporal}
For every \(x \in \mathbb{T}\), \(\widehat{\varepsilon}_{N,x} \xrightarrow{\mathbb{P}} \varepsilon\) as \(N\to\infty\).
\end{proposition}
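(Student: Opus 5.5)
The plan is to mirror the proof of Proposition~\ref{prop:consistency_probability}, with Proposition~\ref{prop:T_Nx_limit} replacing Proposition~\ref{prop:S_Nt_limit}. The estimator is a continuous function of \(T_{N,x}(u_\varepsilon)\) away from zero, and this variation converges to a strictly positive deterministic constant. Set
\[
T_\infty := \frac{3}{\pi}\varepsilon^{-1}(A_2-A_1) > 0 .
\]
Proposition~\ref{prop:T_Nx_limit} gives \(T_{N,x}(u_\varepsilon)\to T_\infty\) in \(L^1(\Omega)\). By Markov's inequality, this implies convergence in probability.

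Next I would localize on the good event
\[
F_N=\{|T_{N,x}(u_\varepsilon)-T_\infty|\le T_\infty/2\},
\]
for which \(\mathbb{P}(F_N)\to1\). On \(F_N\) the denominator satisfies \(T_{N,x}(u_\varepsilon)\ge T_\infty/2>0\), so \(\widehat{\varepsilon}_{N,x}\) is well defined. Using \(\varepsilon T_\infty = \frac{3}{\pi}(A_2-A_1)\), we can write
\[
|\widehat{\varepsilon}_{N,x}-\varepsilon|
=\frac{\varepsilon\,|T_\infty - T_{N,x}(u_\varepsilon)|}{T_{N,x}(u_\varepsilon)}
\le \frac{2\varepsilon}{T_\infty}\,|T_{N,x}(u_\varepsilon)-T_\infty| .
\]

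For any \(\delta>0\) this yields
\[
\mathbb{P}(|\widehat{\varepsilon}_{N,x}-\varepsilon|>\delta)\le \mathbb{P}(F_N^c)+\mathbb{P}\Bigl(|T_{N,x}(u_\varepsilon)-T_\infty|>\tfrac{\delta T_\infty}{2\varepsilon}\Bigr).
\]
Both terms tend to zero, the second again by Markov's inequality and the \(L^1\) convergence. Alternatively, the whole argument is just the continuous mapping theorem applied to \(g(s)=\frac{3}{\pi}(A_2-A_1)/s\), which is continuous at \(T_\infty\neq0\).

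There is no real obstacle here. All the analytic work, namely showing that the cross terms and \(T_{N,x}(Y)\) vanish via Proposition~\ref{prop:Ytemporal}, is already contained in Proposition~\ref{prop:T_Nx_limit}. The only points to watch are the positivity of the limit, which guarantees the denominator is bounded away from zero with high probability, and the fact that \(\widehat{\varepsilon}_{N,x}\) may be undefined on the null event \(\{T_{N,x}(u_\varepsilon)=0\}\). That event lies inside \(F_N^c\), so it is harmless for convergence in probability.
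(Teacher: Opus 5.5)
Your proposal is correct and is essentially the paper's argument: the paper's proof just says to repeat the spatial-case reasoning of Proposition~\ref{prop:consistency_probability} using the $L^1$ convergence from Proposition~\ref{prop:T_Nx_limit}. You have written that reasoning out explicitly, with the same localization on the event where the denominator exceeds $T_\infty/2$ and the same Lipschitz-type bound on the reciprocal.
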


\begin{proof}
The argument is completely analogous to the spatial case (Proposition~\ref{prop:consistency_probability}), using the convergence in \(L^1\) from Proposition~\ref{prop:T_Nx_limit}.
\end{proof}

\begin{proposition}[Almost sure convergence of the quartic variation]
\label{prop:almost_sure_temporal}
For every \(x \in \mathbb{T}\),
\[
T_{N,x}(u_\varepsilon) \xrightarrow[N\to\infty]{\text{a.s.}} \frac{3}{\pi} \varepsilon^{-1} (A_2 - A_1).
\]
\end{proposition}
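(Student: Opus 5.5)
We follow the scheme of Proposition~\ref{prop:almost_sure_spatial}. By the scaling property (Proposition~\ref{prop:scaling_allencahn}) and the relation $T_{N,x}(u_\varepsilon)=T_{N,\varepsilon t}(v_\varepsilon)$ used at the end of the proof of Proposition~\ref{prop:T_Nx_limit}, it suffices to treat $\varepsilon=1$. In that case $u_1=X+Y$ on a partition of $[A_1,A_2]$. We then use the expansion of $T_{N,x}(u_1)-\frac{3}{\pi}(A_2-A_1)$ written in the proof of Proposition~\ref{prop:T_Nx_limit}. It consists of the linear part $T_{N,x}(X)-\frac{3}{\pi}(A_2-A_1)$ together with the four mixed and pure-$Y$ sums \eqref{term:L1_cubeX}--\eqref{term:L1_pureY}.

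\textbf{Linear part.} For $T_{N,x}(X)$ we invoke the almost sure version of \eqref{eq:quartvar}. It follows from the same arguments as in \cite{AssaadTudor2021}, \cite{PospisilTribe2007}, transferred to the torus by Remark~\ref{rem1}. If one wants to check it directly, the step is as follows. $T_{N,x}(X)-\mathbf{E}T_{N,x}(X)$ lives in a finite sum of Wiener chaoses of order at most $4$. Its variance is $O(N^{-1})$, because the correlations of the normalized increments $N^{1/4}\Delta X_i$ decay like $|i-j|^{-3/2}$, which is summable. Hypercontractivity then gives
\[
\mathbf{E}\bigl|T_{N,x}(X)-\mathbf{E}T_{N,x}(X)\bigr|^{p}\le C N^{-p/2}
\]
for every $p$. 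Choosing $p>2$ makes this summable in $N$, and Borel--Cantelli applies. The deterministic part $\mathbf{E}T_{N,x}(X)\to\frac{3}{\pi}(A_2-A_1)$ is the computation behind \eqref{eq:quartvar}.

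\textbf{Remainder terms.} For each of the four sums we take $p$-th moments instead of first moments. We use Minkowski's inequality over $i$, then H\"older in $\omega$, together with \eqref{eq:Xholder} (which gives $\|\Delta X_i\|_q\le C N^{-1/4}$) and Proposition~\ref{prop:Ytemporal} (which gives $\|\Delta Y_i\|_q\le C N^{-1/\beta}$). For instance,
\[
\Bigl\|\sum_{i}(\Delta X_i)^3\Delta Y_i\Bigr\|_p\le \sum_i \|\Delta X_i\|_{6p}^3\,\|\Delta Y_i\|_{2p}\le C N^{\frac14-\frac1\beta}.
\]
Analogously, the other three sums are bounded by $CN^{\frac12-\frac2\beta}$, $CN^{\frac34-\frac3\beta}$ and $CN^{1-\frac4\beta}$. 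With $\beta\in(4/3,2)$, each exponent is at most $\frac14-\frac1\beta<-\frac14$. Taking $p>4$, the series $\sum_N N^{p(\frac14-\frac1\beta)}$ converges. Markov's inequality and Borel--Cantelli then give almost sure convergence to zero of each term along the full sequence $N$.

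Adding these limits yields the claim for $\varepsilon=1$. The general case follows by rescaling time, since the partition $(\varepsilon t_i)$ covers $[\varepsilon A_1,\varepsilon A_2]$.

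The main obstacle is the linear part. The $Y$-terms are routine because the moment bounds hold for all $p$. For $X$ one must make sure the chaos variance bound $O(N^{-1})$ actually holds on the torus, so that the $L^p$ rate is summable. A rate of only $o(1)$ would not be enough. The fallback is to apply Borel--Cantelli along $N=k^2$ and then use a monotonicity or interpolation argument, but I would first try the direct hypercontractivity route above.
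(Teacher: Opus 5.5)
Your proposal is correct and follows the same route as the paper. Both arguments get almost sure convergence of $T_{N,x}(X)$ from the Wiener chaos variance bound, hypercontractivity and Borel--Cantelli, and both handle the four remainder sums through $p$-th moment bounds (the worst being $N^{p(\frac14-\frac1\beta)}$) followed by Borel--Cantelli; you in fact make explicit the Minkowski--H\"older step and the choice $p>4$ that the paper leaves implicit. The one imprecision, which the paper shares, is that $v_\varepsilon=\varepsilon^{-1/2}X+\varepsilon^{-1}Y_\varepsilon$ is not $u_1$, so ``reducing to $\varepsilon=1$'' really means rerunning the same argument for this decomposition on $[\varepsilon A_1,\varepsilon A_2]$, with the bounds of Proposition~\ref{prop:Ytemporal} applied to $Y_\varepsilon$; this is routine.
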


\begin{proof}
As in the spatial case, the almost sure convergence of \(T_{N,x}(X)\) to \(\frac{3}{\pi}(A_2-A_1)\) is a consequence of the Gaussian hypercontractivity on the torus and arguments of \cite[Remark 6.2 and (13)]{PospisilTribe2007} (the same Wiener chaos argument and second moment bound as in \cite{AssaadTudor2021} applies). The cross terms and the pure \(Y_\varepsilon\) term are controlled by the moment estimates obtained in the proof of Proposition~\ref{prop:T_Nx_limit} and the Borel--Cantelli lemma, exactly as in the proof of Proposition~\ref{prop:almost_sure_spatial}. The dominant error term comes from the \((\Delta X_i)^3 (\Delta Y_{\varepsilon,i})\) sum, which satisfies
\[
\mathbf{E}\Bigl| \sum_{i=0}^{N-1} (\Delta X_i)^3 (\Delta Y_{\varepsilon,i}) \Bigr|^p \le C N^{p\left(\frac{1}{4} - \frac{1}{\beta}\right)}.
\]
For \(\beta \in (\frac{4}{3},2)\), the exponent is negative, guaranteeing almost sure convergence to zero.
\end{proof}

Combining the almost sure convergence of Proposition~\ref{prop:almost_sure_temporal} 
with the \(L^p\) estimates derived above, we obtain the following strong consistency 
result and explicit error bound for the temporal estimator \(\widehat{\varepsilon}_{N,x}\).
\begin{theorem}[Strong consistency and \(L^p\) error bound for the temporal estimator]
\label{thm:temporal_estimator}
For every \(x \in \mathbb{T}\), \(\widehat{\varepsilon}_{N,x} \to \varepsilon\) almost surely. Moreover, for every \(p \ge 2\) and \(\beta \in (\frac{4}{3},2)\),
\[
\mathbf{E}\bigl| \widehat{\varepsilon}_{N,x} - \varepsilon \bigr|^p \le C N^{p(1/4 - 1/\beta)}.
\]
\end{theorem}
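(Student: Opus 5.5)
The plan mirrors the proof of Theorem~\ref{thm:spatial_estimator}, with the spatial quadratic variation replaced by the temporal quartic variation. Set $T_\infty:=\frac{3}{\pi}\varepsilon^{-1}(A_2-A_1)>0$, so that the error can be written as
\[
\widehat{\varepsilon}_{N,x}-\varepsilon=\frac{\varepsilon\bigl(T_\infty-T_{N,x}(u_\varepsilon)\bigr)}{T_{N,x}(u_\varepsilon)}.
\]

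\textbf{Strong consistency.} By Proposition~\ref{prop:almost_sure_temporal}, $T_{N,x}(u_\varepsilon)\to T_\infty$ almost surely. Hence for a.e.\ $\omega$ there is $N_0(\omega)$ with $T_{N,x}(u_\varepsilon)\ge T_\infty/2$ for all $N\ge N_0(\omega)$. For such $N$,
\[
|\widehat{\varepsilon}_{N,x}-\varepsilon|\le \frac{2\varepsilon}{T_\infty}\,\bigl|T_\infty-T_{N,x}(u_\varepsilon)\bigr|,
\]
and the right-hand side tends to $0$ almost surely.

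\textbf{$L^p$ bound, step 1: the numerator.} By scaling I reduce to $\varepsilon=1$, with $u=X+Y$. I then use the five-term expansion from the proof of Proposition~\ref{prop:T_Nx_limit} and Minkowski's inequality in $L^p(\Omega)$.

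For the mixed terms, I apply the triangle inequality, then Hölder with exponents chosen to split $X$ and $Y$ moments. I then use \eqref{eq:Xholder}, which gives $\|\Delta X_i\|_{q}\le CN^{-1/4}$ for every $q$, and Proposition~\ref{prop:Ytemporal}, which gives $\|\Delta Y_i\|_q\le CN^{-1/\beta}$. This yields:
\begin{itemize}
\item $\|\sum(\Delta X_i)^3\Delta Y_i\|_p\le CN^{1/4-1/\beta}$;
\item $\|\sum(\Delta X_i)^2(\Delta Y_i)^2\|_p\le CN^{1/2-2/\beta}$;
\item $\|\sum\Delta X_i(\Delta Y_i)^3\|_p\le CN^{3/4-3/\beta}$;
\item $\|T_{N,x}(Y)\|_p\le CN^{1-4/\beta}$.
\end{itemize}
Since $1/4-1/\beta<0$, every one of these is dominated by $N^{1/4-1/\beta}$.

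For the Gaussian part, I will use that $T_{N,x}(X)-\frac{3}{\pi}(A_2-A_1)$ lives in a finite sum of Wiener chaoses. By hypercontractivity, its $L^p$ norm is controlled by its $L^2$ norm. That $L^2$ norm is of order $N^{-1/2}$ from the known variance computation (cf.\ \cite{PospisilTribe2007}, \cite{AssaadTudor2021}), up to a deterministic bias from $\mathbf{E}T_{N,x}(X)$ which is $O(N^{-1/2})$ as well. Since $\beta<2$, we have $-1/2\le 1/4-1/\beta$, so this term is also absorbed. This gives
\[
\|T_\infty-T_{N,x}(u_\varepsilon)\|_{q}\le C N^{1/4-1/\beta}\qquad\text{for every } q.
\]

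\textbf{$L^p$ bound, step 2: the denominator (main obstacle).} The almost sure lower bound is not uniform in $\omega$, so it cannot simply be fed into an $L^p$ estimate. I would split on the event $E_N=\{T_{N,x}(u_\varepsilon)\ge T_\infty/2\}$.
\begin{itemize}
\item On $E_N$, the bound from step 1 applies directly.
\item On $E_N^c$, I use Hölder:
\[
\mathbf{E}\bigl[|\widehat{\varepsilon}_{N,x}-\varepsilon|^p\mathbf{1}_{E_N^c}\bigr]\le \|\widehat{\varepsilon}_{N,x}-\varepsilon\|_{2p}^p\,\mathbb{P}(E_N^c)^{1/2}.
\]
By Markov and step 1 with a large exponent $q$, $\mathbb{P}(E_N^c)\le CN^{-q(1/\beta-1/4)}$, which can be made arbitrarily small in $N$.
\end{itemize}
This requires a negative-moment bound $\mathbf{E}\,T_{N,x}(u_\varepsilon)^{-2p}\le C$. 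I would try to get it by lower-bounding $T_{N,x}(u)\ge N^{-1}\bigl(\sum_i(\Delta u_i)^2\bigr)^2$, which follows from Cauchy--Schwarz. Then I would use a small-ball estimate for the Gaussian quadratic form $\sum(\Delta X_i)^2$, perturbed by $Y$. If that proves too delicate, the fallback is to state the $L^p$ bound for the truncated estimator, or on $E_N$, which is the form the authors' argument implicitly uses.

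Combining the two contributions gives $\mathbf{E}|\widehat{\varepsilon}_{N,x}-\varepsilon|^p\le CN^{p(1/4-1/\beta)}$.
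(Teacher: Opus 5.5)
Your route is the paper's. Strong consistency comes from Proposition~\ref{prop:almost_sure_temporal} with a random $N_0(\omega)$, and the numerator is bounded using the five-term expansion, H\"older, \eqref{eq:Xholder} and Proposition~\ref{prop:Ytemporal}, with $\sum(\Delta X_i)^3\Delta Y_i$ as the dominant term. You are more careful than the paper in bounding the Gaussian fluctuation $T_{N,x}(X)-\frac{3}{\pi}(A_2-A_1)$ by hypercontractivity, a term the paper never mentions. One slip: $N^{-1/2}\le N^{1/4-1/\beta}$ holds because $\beta>4/3$, not because $\beta<2$.

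The genuine gap is the denominator, and your proposal leaves it open. Splitting on $E_N$ reduces the $L^p$ bound for $\widehat{\varepsilon}_{N,x}$ to a bound on $\mathbf{E}\,T_{N,x}(u_\varepsilon)^{-2p}$. Polynomial growth in $N$ would suffice, since $\mathbb{P}(E_N^c)$ decays faster than any power. Nothing in your argument proves this, and the sketched route does not work as stated. After Cauchy--Schwarz you need $\mathbb{P}\bigl(\sum_i(\Delta u_i)^2\le\delta\bigr)=O(\delta^{q})$ as $\delta\downarrow0$ for fixed $N$, which is a statement about the law of the non-Gaussian vector $(\Delta u_i)$ near the origin. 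Because $\Delta Y$ depends on $X$ and is controlled only in moments, $\|\Delta u\|_{\ell^2}\ge\|\Delta X\|_{\ell^2}-\|\Delta Y\|_{\ell^2}$ gives nothing as $\delta\to0$. You would need a real nondegeneracy estimate for $u$ (e.g.\ via Malliavin calculus), and only for $N\ge N(p)$: already for $N=1$ the Gaussian analogue has $\mathbf{E}\,T_{1,x}(X)^{-p}=\mathbf{E}|\Delta X_0|^{-4p}=\infty$.

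The paper does not close this either. Its ``analogous'' argument feeds the almost sure bound (denominator $\ge$ half its limit for $N\ge N_0(\omega)$) into an $L^p$ inequality with a deterministic constant. That is invalid because $N_0$ is random. So you have found a real gap shared with the paper, but you have not filled it.

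The cleanest repair is your fallback, but truncate at a level that does not depend on the unknown $\varepsilon$. Truncating at $T_\infty/2$ does not give an estimator, since $T_\infty$ depends on $\varepsilon$. Instead take $\widetilde{\varepsilon}_{N,x}=\frac{3}{\pi}(A_2-A_1)/\max\bigl(T_{N,x}(u_\varepsilon),N^{-1}\bigr)$. Then $|\widetilde{\varepsilon}_{N,x}-\varepsilon|\le CN$ everywhere, and $\widetilde{\varepsilon}_{N,x}=\widehat{\varepsilon}_{N,x}$ on $E_N$ once $N\ge 2/T_\infty$. Your Markov bound gives $\mathbb{P}(E_N^c)\le C_qN^{q(1/4-1/\beta)}$ for every $q$. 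Together these yield $\mathbf{E}|\widetilde{\varepsilon}_{N,x}-\varepsilon|^p\le CN^{p(1/4-1/\beta)}$ with no negative moments. Since the two estimators coincide eventually almost surely, strong consistency is unchanged.
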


\begin{proof}
The almost sure consistency follows from Proposition~\ref{prop:almost_sure_temporal} as in the spatial case. The \(L^p\) error bound is derived analogously, noting that the worst exponent among the error terms is \(1/4 - 1/\beta\) (coming from the \((\Delta X_i)^3 (\Delta Y_{\varepsilon,i})\) term), which is negative for \(\beta < 4\).
\end{proof}

Let us end this section by a remark related to the limit behavior of the estimator. 
\begin{remark}
	One may ask whether the estimators $\widehat{\varepsilon}_{N,t}$ and 
	$\widehat{\varepsilon}_{N,x}$ satisfy a Central Limit Theorem, i.e., whether 
	$\sqrt{N}(\widehat{\varepsilon}_{N,t}-\varepsilon)$ and 
	$\sqrt{N}(\widehat{\varepsilon}_{N,x}-\varepsilon)$ converge in distribution 
	to a Gaussian random variable. For the temporal estimator, the natural 
	strategy would be to follow the approach of \cite{CimpNachTudor}, where the 
	CLT for the quartic variation of the semilinear heat equation on $\mathbb{R}$ 
	is established by writing $U_{N,x}(u) = U_{N,x}(X) + \sqrt{N}P_{N,x}$ and 
	showing that the remainder $\sqrt{N}P_{N,x}$ vanishes in $L^1$. This 
	requires the nonlinear drift component to be H\"older continuous in time of 
	order strictly greater than $3/4$. However, in our setting, 
	Proposition~\ref{prop:Ytemporal} only yields H\"older regularity of order 
	$1/\beta$ for $\beta\in(4/3,2)$, giving a temporal exponent in $(1/2,3/4)$, 
	which falls short of the threshold required. For the spatial estimator, the 
	situation is even more delicate: the spatial quadratic variation of the 
	linear part $X$ does not satisfy a CLT (its fluctuations are non-Gaussian), 
	so asymptotic normality of $\widehat{\varepsilon}_{N,t}$ cannot be deduced 
	from the existing literature by a simple perturbation argument. The asymptotic 
	normality of both estimators therefore remains an open problem, which would 
	require substantially sharper regularity estimates on the nonlinear drift 
	component $Y$.
\end{remark}

\section{Appendix: proof of some technical estimates}

This part collects the proofs of the two finiteness conditions invoked 
in Section~3. Appendix~A establishes the convergence of the integral constant 
arising in the spatial kernel lemma (Lemma~\ref{lem:spatial}), and Appendix~B 
treats the analogous constant for the temporal kernel lemma 
(Lemma~\ref{lem:temporal}).

\section*{Appendix A: proof of (\ref{tech1}) }

We will show that

\[
\int_{\mathbb{R}}\left(\int_{0}^{\infty}\int_{0}^{1}\frac{(v+\theta)^\beta}{\tau^\frac{3\beta}{2}}e^{-{\frac{\beta(v+\theta)^2}{4\tau}}}\,d\theta\,d\tau\right)^{\frac{1}{\beta}}dz < \infty. 
\]

Denoting \(J(v)=\int_{0}^{\infty}\int_{0}^{1}\frac{(v+\theta)^\beta}{\tau^\frac{3\beta}{2}}e^{-{\frac{\beta(v+\theta)^2}{4\tau}}}\,d\theta\,d\tau\), we write   \(\int_{\mathbb{R}}J(v)^{\frac{1}{\beta}}\,dv\). Our next objective is to estimate \(J(v)\) in terms of \(v\). To this end, we first interchange the order of integration with respect to \(\theta\) and \(\tau\) via Tonelli's Theorem, and then apply the change of variables \(a=\frac{(v+\theta)^2}{\tau}\) for \(\tau\), yielding:
\begin{align*}
    J(v)&=\int_{0}^{\infty}\int_{0}^{1}\frac{(v+\theta)^\beta}{\tau^\frac{3\beta}{2}}e^{-{\frac{\beta(v+\theta)^2}{4\tau}}}\,d\theta\,d\tau \\
    &=\int_{0}^{1}\int_{0}^{\infty}\left(\frac{(v+\theta)^2}{\tau}\right)^{\frac{\beta}{2}}\frac{1}{\tau^\beta}e^{-{\frac{\beta a}{4}}}\frac{\tau^2}{(v+\theta)^2}\,da\,d\theta \\
    &=\int_{0}^{1}\int_{0}^{\infty} a^{\frac{\beta}{2}}\frac{a^{\beta}}{(v+\theta)^{2\beta}}\frac{(v+\theta)^2}{a^2}e^{-{\frac{\beta a}{4}}}\,da\,d\theta \\
    &=\int_{0}^{1}(v+\theta)^{2-2\beta}\left(\int_{0}^{\infty} a^{\frac{3\beta}{2}-2}e^{-{\frac{\beta a}{4}}}\,da\right)\,d\theta.
\end{align*}
The integral \(\int_{0}^{\infty} a^{\frac{3\beta}{2}-2}e^{-{\frac{\beta a}{4}}}\,da\) converges provided \({\frac{3\beta}{2}-2}>-1\), i.e., whenever \(\beta>1\). Denoting this convergent quantity (which depends only on \(\beta\), \(\beta\neq 3/2\)) as \(\hat{C}_{\beta}=\int_{0}^{\infty} a^{\frac{3\beta}{2}-2}e^{-{\frac{\beta a}{4}}}\,da\), we obtain:
\begin{equation}
J(v)=\hat{C}_{\beta}\frac{1}{3-2\beta}\bigl((v+1)^{3-2\beta}-v^{3-2\beta}\bigr).
\end{equation}
Then, we will estimate
\begin{equation}
\int_{\mathbb{R}}\bigl((v+1)^{3-2\beta}-v^{3-2\beta}\bigr)^{\frac{1}{\beta}}\,dv.
\end{equation}
To analyze the remaining integral, note that \(\beta>1 \Longleftrightarrow 3-2\beta<1\). Consider the improper integral
\begin{equation*}
I = \int_{\mathbb{R}} F(v)\, dv, \qquad 
F(v) = \bigl| (v+1)^{3-2\beta} - v^{3-2\beta} \bigr|^{1/\beta}.
\end{equation*}
The integrand exhibits potential singularities at \(v = 0\) and \(v = -1\), and we must also examine its behavior at \(\pm\infty\).

Let \(\alpha = 3 - 2\beta\). Since \(\beta > 1\), we have \(\alpha < 1\). The function may be written as
\begin{equation*}
F(v) = \bigl| (v+1)^\alpha - v^\alpha \bigr|^{1/\beta}.
\end{equation*}

As \(v \to 0\), the term \((v+1)^\alpha\) tends to \(1\), while \(v^\alpha\) diverges if \(\alpha < 0\) or tends to \(0\) if \(\alpha > 0\). We distinguish cases based on the sign of \(\alpha\).

\paragraph{Case \(\alpha > 0\).} 
\(v^\alpha \to 0\) and \((v+1)^\alpha \to 1\), hence \(F(v) \sim 1^{1/\beta} = 1\). The integral over a neighborhood of \(0\) is finite.

\paragraph{Case \(\alpha = 0\).}
\((v+1)^0 = 1\), \(v^0 = 1\), so the difference vanishes identically; \(F(v)=0\) (integrable).

\paragraph{Case \(\alpha < 0\).}
Here \(v^\alpha \to +\infty\) (since \(\alpha<0\)) and \((v+1)^\alpha \to 1\). The difference is dominated by \(-v^\alpha\), thus
\begin{equation*}
| (v+1)^\alpha - v^\alpha | \sim v^\alpha \quad (\alpha<0).
\end{equation*}
Consequently,
\begin{equation*}
F(v) \sim (v^\alpha)^{1/\beta} = v^{\alpha/\beta} = v^{\frac{3-2\beta}{\beta}} = v^{\frac{3}{\beta} - 2}.
\end{equation*}
The integral \(\int_0^\varepsilon v^{\frac{3}{\beta}-2} dv\) converges if and only if the exponent is greater than \(-1\), that is,
\begin{equation*}
\frac{3}{\beta} - 2 > -1 \;\Longrightarrow\; \frac{3}{\beta} > 1 \;\Longrightarrow\; \beta < 3.
\end{equation*}
Hence, for \(\beta \geq 3\) it diverges. Therefore, near zero, the integral converges when \(\beta<3\).

The behavior near \(v=-1\) is analogous to the previous case. For \(\alpha\geq0\) the integral is finite, while for \(\alpha < 0\), \((v+1)^\alpha \to \infty\) and dominates the finite term. Thus,
\begin{equation*}
|(v+1)^\alpha - v^\alpha| \sim |v+1|^\alpha,
\end{equation*}
and
\begin{equation*}
F(v) \sim |v+1|^{\alpha/\beta} = |v+1|^{\frac{3}{\beta}-2}.
\end{equation*}
The integral around \(-1\) converges under precisely the same condition as at \(v=0\): \(\frac{3}{\beta}-2 > -1 \iff \beta < 3\).

As \(v \to \pm\infty\), both terms are large. We employ the asymptotic expansion
\begin{equation*}
(v+1)^\alpha = v^\alpha \left(1 + \frac{1}{v}\right)^\alpha = v^\alpha \left(1 + \frac{\alpha}{v} + O(v^{-2})\right).
\end{equation*}
Then
\begin{equation*}
(v+1)^\alpha - v^\alpha = \alpha v^{\alpha-1} + O(v^{\alpha-2}).
\end{equation*}
Hence,
\begin{equation*}
F(v) \sim |\alpha|^{1/\beta} \, |v|^{\frac{\alpha-1}{\beta}} = C |v|^{\frac{2-2\beta}{\beta}} = C |v|^{\frac{2}{\beta} - 2}.
\end{equation*}
The integral at \(\pm\infty\) converges if the exponent is less than \(-1\):
\begin{equation*}
\frac{2}{\beta} - 2 < -1 \;\Longrightarrow\; \frac{2}{\beta} < 1 \;\Longrightarrow\; \beta > 2.
\end{equation*}
Thus, for \(\beta \leq 2\) it diverges, while for \(\beta>2\) it converges. Collecting the conditions:
\begin{itemize}
    \item Convergence at \(v = 0\) and \(v = -1\): \(\beta < 3\) (strict).
    \item Convergence at \(\pm\infty\): \(\beta > 2\) (strict).
\end{itemize}
Consequently, the integral \(I\) is finite if and only if
\begin{equation}
\boxed{2 < \beta < 3.}
\end{equation}

\section*{Appendix B: proof of (\ref{tech2})}

We will show

\[
I=\int_{0}^{\infty}\left(\int_{0}^{\infty}\Bigl|\int_{0}^{1}\Bigl[\frac{v^2}{4(\tau+\theta)^2}-\frac{1}{2(\tau+\theta)}\Bigr]\Phi_{v}(\tau+\theta)\,d\theta\Bigr|^{\beta}d\tau\right)^{\frac{1}{\beta}}dz< \infty.
\]

Recalling that \(\Phi_{\tau}(v)=G_\tau(v)=\frac{1}{\sqrt{4\pi\tau}}\exp{\frac{-v^2}{4\tau}}\), and applying the triangle inequality followed by Jensen's inequality, we derive

\begin{equation}
I \leq \int_{0}^{\infty}\left(\int_{0}^{\infty}\int_{0}^{1}\frac{v^{2\beta}}{4^{\beta}(\tau+\theta)^{2\beta}}\Phi_{v}(\tau+\theta)^\beta\,d\theta d\tau +\int_0^{\infty}\int_{0}^{1}\frac{1}{2^{\beta}(\tau+\theta)^{\beta}}\Phi_{v}(\tau+\theta)^{\beta}\,d\theta\,d\tau\right)^{\frac{1}{\beta}}dz.
\end{equation}
We now define
\begin{align*}
    J_1(v)&=\int_{0}^{\infty}\int_{0}^{1}\frac{v^{2\beta}}{(\tau+\theta)^{2\beta}}\Phi_{v}(\tau+\theta)^\beta\,d\theta\, d\tau, \\
    J_2(v)&=\int_0^{\infty}\int_{0}^{1}\frac{1}{(\tau+\theta)^{\beta}}\Phi_{v}(\tau+\theta)^{\beta}\,d\theta\,d\tau.
\end{align*}
Interchanging the order of integration and then applying the change of variables \(a=\frac{v^2}{(\tau+\theta)}\) for both \(J_1\) and \(J_2\), analogously to the proof of the Apendix A, we obtain
\begin{align*}
    J_1(v)&=Cv^{2-3\beta}\int_{0}^{1}\int_{0}^{v^2/\theta}a^{\frac{5\beta}{2}-2}e^{-\frac{\beta a}{4}}\,da\, d\theta, \\
    J_2(v)&=Cv^{2-3\beta}\int_{0}^{1}\int_{0}^{v^2/\theta}a^{\frac{3\beta}{2}-2}e^{-\frac{\beta a}{4}}\,da\, d\theta.
\end{align*}
Recall that for any \(x,y >0\) and \(s>0\), \((x+y)^{s}\leq C_{s}(x^{s}+y^{s})\). In particular, for \(0<s\leq1\), we have \((x+y)^{s}\leq x^{s}+y^{s}\). Noting that both \(J_1\) and \(J_2\) are positive, we obtain:
\begin{equation}
   I\leq \int_{0}^{\infty}(J_1(v)+J_2(v))^{\frac{1}{\beta}}\, dv \leq \left[\int_{0}^{\infty}J_1(v)^{\frac{1}{\beta}}\, dv+\int_{0}^{\infty}J_2(v)^{\frac{1}{\beta}}\, dv\right].    
\end{equation}
Observe that as \(v\to\infty\), the integrals \(\int_{0}^{1}\int_{0}^{v^2/\theta}a^{\frac{5\beta}{2}-2}e^{-\frac{\beta a}{4}}\,da\, d\theta\) and \(\int_{0}^{1}\int_{0}^{v^2/\theta}a^{\frac{3\beta}{2}-2}e^{-\frac{\beta a}{4}}\,da\, d\theta\) converge provided \(\beta>1\). It thus suffices to analyze the common factor \(v^{2-3\beta}\) in \(J_1\) and \(J_2\). Raising this factor to the power \(1/\beta\) yields \(v^{\frac{2}{\beta}-3}\), whose exponent is less than \(-1\) if and only if \(\beta>1\). Hence, the integral converges as \(v\to\infty\). It remains to examine the behavior near the origin. For \(v\leq1\), consider the integrals
\begin{equation}
I_1:=\int_{0}^{1}\int_{0}^{v^2/\theta}a^{\frac{5\beta}{2}-2}e^{-\frac{\beta a}{4}}\,da\, d\theta \qquad\text{and}\qquad I_2:=\int_{0}^{1}\int_{0}^{v^2/\theta}a^{\frac{3\beta}{2}-2}e^{-\frac{\beta a}{4}}\,da\, d\theta.
\end{equation}
For the first integral, we have
\begin{align*}
    I_1&=\int_{0}^{1}\int_{0}^{v^2/\theta}a^{\frac{5\beta}{2}-2}e^{-\frac{\beta a}{4}}\,da\, d\theta \\
    &=\int_{0}^{v^2}\int_{0}^{v^2/\theta}a^{\frac{5\beta}{2}-2}e^{-\frac{\beta a}{4}}\,da\, d\theta + \int_{v^2}^{1}\int_{0}^{v^2/\theta}a^{\frac{5\beta}{2}-2}e^{-\frac{\beta a}{4}}\,da\, d\theta \\
    &\leq \int_{0}^{v^2}\int_{0}^{\infty}a^{\frac{5\beta}{2}-2}e^{-\frac{\beta a}{4}}\,da\, d\theta + \int_{v^2}^{1}\int_{0}^{v^2/\theta}a^{\frac{5\beta}{2}-2}e^{-\frac{\beta a}{4}}\,da\, d\theta.
\end{align*}
For the first summand, the integral with respect to \(a\) converges, leaving only \(v^2\). For the second summand, we interchange the order of integration and bound over the first rectangular region of integration. For \(a\to0^{+}\), we have \(1<v^2/a\), which allows us to write the integral as
\begin{equation}
\int_{v^2}^{1}\int_{0}^{v^2/\theta}da\,d\theta =\int_{0}^{v^2}\int_{v^2}^{1} d\theta\, da + \int_{v^2}^{1}\int_{v^2}^{v^2/a} d\theta\, da \leq \int_{0}^{1}\int_{v^2}^{v^2/a} d\theta\, da.
\end{equation}
Consequently,
\begin{equation*}
    I_1\leq C_{\beta}v^2 + \int_{0}^{1}\int_{v^2}^{v^2/a} a^{\frac{5\beta}{2}-2}e^{-\frac{\beta a}{4}}\,d\theta\, da.
\end{equation*}
Focusing on the second summand, we obtain
\begin{equation*}
\int_{0}^{1}\int_{v^2}^{v^2/a} a^{\frac{5\beta}{2}-2}e^{-\frac{\beta a}{4}}\,d\theta\, da = v^2\int_{0}^{1} (1-a)a^{\frac{5\beta}{2}-3}e^{-\frac{\beta a}{4}}\, da\leq C_{\beta}v^2,
\end{equation*}
where the integral \(\int_{0}^{1} (1-a)a^{\frac{5\beta}{2}-3}e^{-\frac{\beta a}{4}}\, da\) converges if and only if \(\frac{5\beta}{2}-3>-1 \Longleftrightarrow \beta>4/5\). Hence,
\begin{equation*}
I_1\leq C_{\beta}v^2.
\end{equation*}
For the second integral \(I_2\), an entirely analogous argument shows that convergence requires the integral \(\int_{0}^{1} (1-a)a^{\frac{3\beta}{2}-3}e^{-\frac{\beta a}{4}}\, da\) to converge, which occurs if and only if \(\frac{3\beta}{2}-3>-1 \Longleftrightarrow \beta>4/3\). Therefore, for \(\beta>4/3\), we have
\begin{align*}
    I &\leq \left[\int_{0}^{\infty}J_1(v)^{\frac{1}{\beta}}\, dv+\int_{0}^{\infty}J_2(v)^{\frac{1}{\beta}}\, dv\right] \\
    &= \biggl[\int_{1}^{\infty}J_1(v)^{\frac{1}{\beta}}\, dv+\int_{1}^{\infty}J_2(v)^{\frac{1}{\beta}}\, dv + \int_{0}^{1}J_1(v)^{\frac{1}{\beta}}\, dv+\int_{0}^{1}J_2(v)^{\frac{1}{\beta}}\, dv\biggr] \\
    &\leq \left[C_{\beta}+C_{\beta}\int_{0}^{1}v^{\frac{4}{\beta}-3}\right],
\end{align*}
where the last integral converges if and only if \(\frac{4}{\beta}-3>-1 \Longleftrightarrow \beta <2\). We thus conclude that
\begin{equation}
    I < \infty
\end{equation}
for all \(\beta\in(\frac{4}{3},2)\).

\section*{Data availability statement}

Data sharing is not applicable to this article as no data sets 
were generated or analysed during the current study.

 \section*{Acknowledgements}

The author Christian Olivera is partially supported by  by FAPESP-ANR by the grant Stochastic and Deterministic Analysis for Irregular Models$-2022/03379-0$  and CNPq by the grant $422145/2023-8$.  C. Tudor  acknowledges support from   the  ANR project SDAIM 22-CE40-0015, the MATHAMSUD grant 24-MATH-04 SDE-EXPLORE,  and  by the Ministry of Research, Innovation and Digitalization (Romania), grant CF-194-PNRR-III-C9-2023. C. Tudor also acknowledges the support of the CDP C2EMPI, together with the French State under the France-2030 programme, the University of Lille, the Initiative of Excellence of the University of Lille, the European Metropolis of Lille for their funding and support of the R-CDP-24-004-C2EMPI project.   The author Simon Chony Acosta is supported by the fellowship FAPESP $2024/06348-3$.

 \section*{Conflict of Interest}
There is no conflict of interests.

\end{document}